\def\XXint#1#2#3{{\setbox0=\hbox{$#1{#2#3}{\int}$}
     \vcenter{\hbox{$#2#3$}}\kern-.5\wd0}}
\theoremstyle{plain}
\newtheorem{theo}{Theorem}[section]
\newtheorem{lem}[theo]{Lemma}
\newtheorem{prop}[theo]{Proposition}
\theoremstyle{definition}
\newtheorem{definition}[theo]{Definition}
\theoremstyle{remark}
\newtheorem{rem}[theo]{Remark}
\numberwithin{equation}{section}
\newcommand{\R}{\mathbb{R}}
\newcommand{\N}{\mathbb{N}}
\newcommand{\M}{\mathbb{M}}
\newcommand{\K}{\mathbb{K}}
\title{Size estimates for nanoplates
}
\author{Antonino Morassi\thanks{Dipartimento Politecnico di Ingegneria e Architettura,
Universit\`a degli Studi di Udine, via Cotonificio 114, 33100
Udine, Italy. E-mail: \textsf{antonino.morassi@uniud.it}}, \  Edi
Rosset\thanks{Dipartimento di Matematica e Geoscienze,
Universit\`a degli Studi di Trieste, via Valerio 12/1, 34127
Trieste, Italy. E-mail: \textsf{rossedi@units.it}}, \ Eva 
Sincich\thanks{Dipartimento di Matematica e Geoscienze,
	Universit\`a degli Studi di Trieste, via Valerio 12/1, 34127
	Trieste, Italy. E-mail: \textsf{esincich@units.it}} \
and Sergio
Vessella\thanks{Dipartimento di Matematica e Informatica ``Ulisse
Dini'', Universit\`a degli Studi di Firenze, Via Morgagni 67/a,
50134 Firenze, Italy. E-mail: \textsf{sergio.vessella@unifi.it}}}
\begin{document}

\maketitle

\noindent \textbf{Abstract.} 
We consider the problem of determining, within an elastic isotropic nanoplate in bending, the possible presence of an inclusion made of different elastic material. Under suitable a priori assumptions on the unknown inclusion, we provide quantitative upper and lower estimates for the area of the unknown defect in terms of the works exerted by the boundary data when the inclusion is present or absent.  

\medskip

\medskip

\noindent \textbf{Mathematical Subject Classifications (2010): } 35J30, 35R30,74K20


\medskip

\medskip

\noindent \textbf{Key words:}  Inverse problems, elastic nanoplates, size estimates, unique continuation.

\tableofcontents 

\section{Introduction} 
\label{sec:introduction}

Over the past three decades, micro- and nano-electromechanical systems (MEMS and NEMS) have found wide applications as sensors, actuators and for vibration control purposes \cite{Eom-PR-2011}. Due to their small size and the material properties, they possess superior mechanical, thermal and electrical performance compared to classical devices, allowing extreme miniaturisation, high reliability, low costs and reduced energy consumption for their operation. These indisputable advantages have favoured their rapid application in strategic areas, such as communications, biological technologies, mechanics and aerospace. 

Nanoplates are the core components of MEMS and NEMS, and their proper functionality is an essential requirement for the devices. The demand for higher performances and small sizes (typical size around $1 \div 10 \times 10^{-4}$ meters) have led to higher strain/stress states and very challenging operating conditions that can increase the probability of structural failure. Furthermore, defects such as cracks, internal voids, inhomogeneous material properties and abrasions can appear during the manufacturing process and can evolve during service, leading to the activation of mechanical device failure \cite{CYM-NLD-2017}, \cite{JSP-2009-JMS},  \cite{YZC-SV-2020}. 

For the reasons stated above, the problem of defect identification is attracting increasing attention from researchers interested in the behaviour of MEMS/NEMS devices. In this paper we consider the inverse problem of determining, within an isotropic elastic nanoplate subjected to static bending deformation, the possible presence of an inclusion made by different elastic material from a single measurement of boundary data.

Let us formulate the inverse problem. Let us consider a nanoplate in the referential configuration $\Omega \times [ -t/2, t/2  ]$, where $\Omega$ is a plane domain representing the middle surface of the nanoplate and $t$ is the uniform thickness, $t << diam(\Omega)$. Let $D$ be the subset corresponding to the unknown inclusion. It is well known that classical continuum mechanics, as a length-scale free theory, loses its predictive capacity for nanostructures since it is not able to take into account the presence of size effects in the material response. Here, we shall adopt the simplified strain gradient theory proposed by Lam et al. \cite{Lam2003} to model the mechanical behavior of the material in infinitesimal deformation. Under the kinematic assumptions of Kirchhoff-Love's plate theory, the statical equilibrium problem of the nanoplate loaded at the boundary and under vanishing body forces is described by the following Neumann boundary value problem \cite{KMZ-MMAS-2022}
\begin{equation}
	\label{eq:introduzione-PDE}
	(M_{\alpha \beta} +  \overline{M}_{\alpha \beta \gamma, \gamma}^h   )_{,\alpha \beta}=0 \quad \hbox{in } \Omega,
\end{equation}
\begin{multline}
	\label{eq:introduzione-bc1}
	(
	M_{\alpha \beta} +  \overline{M}_{\alpha \beta \gamma, \gamma}^h
	)_{,\alpha} n_\beta
	+
	(
	(
	M_{\alpha \beta} +  \overline{M}_{\alpha \beta \gamma, \gamma}^h
	)n_\alpha {\tau}_\beta	
	)_{,s}
	+
	(
	\overline{M}_{\alpha \beta \gamma}^h {\tau}_\alpha {\tau}_\beta n_\gamma
	)_{,ss} -
	\\
	-
	(
	\overline{M}_{\alpha \beta \gamma}^h n_\gamma
	(  {\tau}_{\alpha,s} {\tau}_\beta - n_{\alpha,s} n_\beta )
	)	
	_{,s}
	=
	- \widehat{V}  \qquad \hbox{on } \partial \Omega,
\end{multline}
\begin{multline}
	\label{eq:introduzione-bc2}
	(
	M_{\alpha \beta} +  \overline{M}_{\alpha \beta \gamma, \gamma}^h
	)n_\alpha n_\beta 
	+
	(
	\overline{M}_{\alpha \beta \gamma}^h n_\gamma ({\tau}_\alpha n_\beta + {\tau}_\beta n_\alpha )
	)_{,s}
	-
	\\
	- \overline{M}_{\alpha \beta \gamma}^h n_\gamma
	(n_{\alpha,s} {\tau}_\beta)
	=\widehat{M}_n  \qquad \hbox{on } \partial \Omega,
\end{multline}
\begin{equation}
	\label{eq:introduzione-bc3}
	\overline{M}_{\alpha \beta \gamma}^h n_\alpha n_\beta n_\gamma
	=-\widehat{M}_n^h  \qquad \hbox{on } \partial \Omega.
\end{equation}
The functions $M_{\alpha \beta}= M_{\alpha \beta}(u)$, $\overline{M}_{\alpha \beta \gamma}^h=\overline{M}_{\alpha \beta \gamma}^h(u)$, $\alpha, \beta, \gamma=1,2$, in the above equations are the cartesian components of the couple tensor $M=(M_{\alpha \beta})$ and the high-order couple tensor $\overline{M}^h=(\overline{M}_{\alpha \beta \gamma}^h)$, respectively, corresponding to the transverse displacement $u$. The constitutive equations of $M$ and $\overline{M}^h$ are as follows
\begin{equation}
	\label{eq:introduzione-Mcompatto}
	M(u)= (\chi_{\Omega \setminus D} (\mathbb{P}+\mathbb{P}^h   ) + \chi_D ( \widetilde{\mathbb{P}} + \widetilde{\mathbb{P}}^h   ))D^2u,
\end{equation}
\begin{equation}
	\label{eq:introduzione-M^hcompatto}
	\overline{M}^h(u)= (\chi_{\Omega \setminus D} \mathbb{Q}  + \chi_D  \widetilde{\mathbb{Q}})D^3u,
\end{equation}
where $\mathbb{P}$, $\mathbb{P}^h$, $\mathbb{Q}$ $\widetilde{\mathbb{P}}$, $\widetilde{\mathbb{P}}^h$, $\widetilde{\mathbb{Q}}$ are the tensors expressing the response of the material and are defined in detail in Section \ref{sec:results}. 
The vectors $\tau$ and $n$ are the unit tangent and the unit outer normal to $\partial \Omega$, and $s$ is an arclength chosen on $\partial \Omega$. The loads acting on $\partial \Omega$ include the shear force $\widehat{V}$, the bending moment $\widehat{M}_n$ and the high-order bending moment $\widehat{M}_n^h$. A full description of the mechanical nanoplate model and the main properties of the direct problem can be found in Section \ref{sec:direct-pbm}, to which we refer for .

The first question to be asked in approaching our inverse problem is the question of uniqueness. In particular: \textit{does a single boundary measurement of  Neumann data $\{\widehat{V}, \widehat{M}_n, \widehat{M}_n^h\}$ and Dirichlet data $\{u, u_{,n}, u_{,nn}\}$ uniquely determine the unknown inclusion $D$?} In spite of the simplicity with which it is formulated, this inverse problem is extremely difficult and even in the simpler context of electrical impedance tomography, which involves a second-order elliptic equation, a general uniqueness result is missing. We refer tox  \cite{AGL2015}, \cite{A98}, \cite{CYM-NLD-2017} for an up-to-date overview and an extensive reference list.

In the present note, we discuss another direction of research. In fact, instead of determining the exact shape and location of $D$, we evaluate its size in terms of the data. More precisely we provide quantitative estimate on the area of the unknown inclusion in terms of the quantities  
\begin{equation}
	\label{eq:11-1-intro}
	W=L(u) = -\int_{\partial \Omega}  \widehat{V} u + \widehat{M}_n u,_{n} + \widehat{M}^h_n u,_{nn},
\end{equation}
\begin{equation}
	\label{eq:11-2-intro}
	W_0=L(u_0) = -\int_{\partial \Omega}  \widehat{V} u_0 + \widehat{M}_n u_{0,n} + \widehat{M}^h_n u_{0,nn},
\end{equation}
which represent the works exerted by the boundary data when the inclusion $D$ is present or absent, respectively. Here $u_0$ is the transverse displacement of the reference nanoplate without inclusion, namely $u_0$ satisfies \eqref{eq:introduzione-PDE}-\eqref{eq:introduzione-bc3} when $D$ is the empty set. 

In order to treat the inverse problem, we first need to analyze the direct one. In Subsection \ref{sec:direct-pbm}, we collect some previous results contained in \cite{MRSV2022}, concerning the well posedness of the direct problem (see Theorem \ref{theo:theorem-5-1}) and, for the case in which the inclusion is absent, the $H^4$ regularity up to the boundary of the solution of the Neumann problem  \eqref{eq:introduzione-PDE}-\eqref{eq:introduzione-bc3}  (see Theorem \ref{theo:5bis-1}) and the $H^6$ regularity in the interior for solutions of the underlying equation \eqref{eq:introduzione-PDE} (see Theorem \ref{theo:5ter-1}), under suitable regularity assumptions on the coefficients and on the boundary of $\Omega$.

In Subsection \ref{sec:inverse-pbm} we rigorously formulate the inverse problem and state our main a-priori assumptions.  In Subsection \ref{sec:main-results} we present our main results that can be summarized as follows. 
\begin{enumerate}
\item [i)] In Theorem \ref{theo:13-1} we consider the case when $D$ is a general measurable set compactly contained in $\Omega$ and we provide the following lower bound of its size
\begin{equation}\label{intro:LB}
|D|\ge C\left| \frac{W-W_0}{W_0}\right|,
\end{equation}
where $C$ is estimated in terms of the a priori data. The main idea underlying this kind of estimates is that the integral 
\begin{equation}\label{intro:energyestimate}
\int_{D} |D^2 u_0|^2 + |D^3 u_0|^2\ \ \mbox{is comparable to}\ \ |W_0-W| \ . 
\end{equation}
The above behavior follows from energy estimates for the Neumann problem \eqref{eq:introduzione-PDE}-\eqref{eq:introduzione-bc3} both when the inclusion is present and when it is absent (see Lemma \ref{lem:16.1} for a precise statement). By using interior regularity estimate for the sixth order elliptic equation we can control from below the size of $D$ in terms of the integral in \eqref{intro:energyestimate} and achieve the desired bound \eqref{intro:LB}. 
\item [ii)] In Theorem \ref{theo:14-1} we prove an upper bound for the size of $D$ under the following so-called {\it fatness condition} on $D$ itself. Namely, given $h>0$ and denoting $D_h=\{x\in D: \mbox{dist}(x,\partial D)>h \}$, if we assume 
\begin{equation}\label{intro:fatcond}
|D_h|\ge \frac{1}{2} |D|,
\end{equation}
then we have 
\begin{equation}\label{UFat}
|D|\le C\bigg| \frac{W-W_0}{W_0}\bigg|,
\end{equation}
where $C$ is estimated in terms of the a priori data.  Although in order to obtain \eqref{UFat} we still make use of \eqref{intro:energyestimate}, in this step a deeper analysis is required since we need to estimate $\int_{D} |D^2 u_0|^2 + |D^3 u_0|^2$ from below, but in general $D^2 u_0$ and $D^3 u_0$ may vanish at interior points.  In this respect, we prove the following unique continuation result known as {\it Lipschitz propagation of smallness} (see Proposition \ref{LPS} for a precise statement).

 There exists $\chi>1$ depending on the a priori data such that for every $\rho>0$ and every $x\in \Omega$ such that $\mbox{dist}(x,\partial \Omega)>\chi\rho$ we have 
\begin{equation}\label{intro:LPS}
\int_{B_{\rho}(x)}|D^2 u_0|^2 \ge C \int_{\Omega}|D^2 u_0|^2, 
\end{equation}
where $C>0$ is estimated in terms of $\rho$ and the a priori data. 
From such an estimate, inequality \eqref{UFat} follows by covering  $D_h$ by non overlapping squares of side $\epsilon = O(h)$. 

\item [iii)] In Theorem \ref{theo:15-1}, we remove the fatness condition on $D$ (compactly contained in $\Omega$) and we state an upper bound for the size of $D$ of the following form 
\begin{equation}\label{Ugeneralcase}
|D|\le C\bigg| \frac{W-W_0}{W_0}\bigg|^{1/p},
\end{equation}
where $C>0, p>1$ are estimated in terms of the a priori data. 

In this case, in contrast with the `` fat" one, we need to introduced a further sophisticated argument arising in the theory of Muckenhoupt weight (see Proposition \ref{prop:Ap}).
By combining the $A_p$-estimates with a covering argument and \eqref{intro:energyestimate} we end up with the desired estimate.
 \end{enumerate}
Let us also recall that the prototype of this class of inverse problems is the determination of the size of an inclusion within an electrostatic conductor (\cite{AR1998,ARS2000,CV2003,KSS97}) and that such an issue has been extended to more complicated equations and systems (\cite{AMR2002,BMRTV2019,DLMRVW,Ik93, MRV2007,MRV2009,MRV2013,MRV2018}) . However, although our strategy belongs to the ones adopted in this line of research, the treatment of a higher order underlying partial differential equations has required the development of new tools. 

Indeed in Section \ref{sec:DIandTS} we present some new estimate of unique continuation in the form of a doubling inequality and a three sphere inequality for the Hessian of the solution of the unperturbed nanoplate. The iterated use of such three sphere inequalities allows us to obtain \eqref{intro:LPS}. Moreover, we also provide a global doubling inequality expressed in terms  of the known boundary data (see Proposition \ref{DoublingData}) which establishes a bridge with the theory of Muckenhoupt weight.

\section{Notation} 
\label{sec:notation}
\noindent 
Let $P=(x_1(P), x_2(P))$ be a point of $\R^2$. We shall denote by
$B_r(P)$ the disk in $\R^2$ of radius $r$ and center $P$ and by
$R_{a,b}(P)$ the rectangle of center $P$ and sides parallel to the
coordinate axes, of length $2a$ and $2b$, namely
\begin{equation}
	\label{eq:rectangle}
	R_{a,b}(P)=\{x=(x_1,x_2)\ |\ |x_1-x_1(P)|<a,\ |x_2-x_2(P)|<b \}.     
\end{equation}

\begin{definition}
	\label{def:reg_bordo} (${C}^{k,\alpha}$ regularity)
	Let $\Omega$ be a bounded domain in ${\R}^{2}$. Given $k,\alpha$,
	with $k\in\N$, $k\geq 1$, $0<\alpha\leq 1$, we say that a portion $S$ of
	$\partial \Omega$ is of \textit{class ${C}^{k,\alpha}$ with
		constants $r_{0}$, $M_{0}>0$}, if, for any $P \in S$, there exists
	a rigid transformation of coordinates under which we have $P=0$
	and
	\begin{equation*}
		\Omega \cap R_{r_0,2M_0r_0}=\{x \in R_{r_0,2M_0r_0} \quad | \quad
		x_{2}>g(x_1)
		\},
	\end{equation*}
	where $g$ is a ${C}^{k,\alpha}$ function on $[-r_0,r_0]$
	satisfying
	\begin{equation*}
		g(0)=g'(0)=0,
	\end{equation*}
	\begin{equation*}
		\|g\|_{{C}^{k,\alpha}([-r_0,r_0])} \leq M_0r_0,
	\end{equation*}
	where
	\begin{equation*}
		\|g\|_{{C}^{k,\alpha}([-r_0,r_0])} = \sum_{i=0}^k
		r_0^i\sup_{[-r_0,r_0]}|g^{(i)}|+r_0^{k+\alpha}|g|_{k,\alpha},
	\end{equation*}
	
	\begin{equation*}
		|g|_{k,\alpha}= \sup_ {\overset{\scriptstyle t,s\in
				[-r_0,r_0]}{\scriptstyle t\neq s}}\frac{|g^{(k)}(t) -
			g^{(k)}(s)|}{|t-s|^\alpha}.
	\end{equation*}
\end{definition}

We use the convention to normalize all norms in such a way that
their terms are dimensionally homogeneous and coincide with the
standard definition when the dimensional parameter equals one. For
instance, given a function $u:\Omega \rightarrow \R$ we denote 

\begin{equation}
	\label{eq:1.1.1}
	\|u\|_{H^k(\Omega)}=r_0^{-1} \left ( \sum_{i=0}^k r_0^{2k}\int_\Omega |D^ku|^2
	\right )^{\frac{1}{2}}
\end{equation}
where 
\begin{equation}
\label{eq:1.1.2}
\int_{\Omega} |D^k u |^2=\int_{\Omega}\sum_{|\alpha|=k} |D^\alpha u |^2
\end{equation}


%
and so on for boundary and trace norms.

For any $h>0$ we set 

$$\Omega_{h}=\{x\in \Omega \ : \ \mbox{dist}(x,\partial \Omega)>h \}.$$

Given a bounded domain $\Omega$ in $\R^2$ such that $\partial
\Omega$ is of class $C^{k,\alpha}$, with $k\geq 1$, we consider as
positive the orientation of the boundary induced by the outer unit
normal $n$ in the following sense. Given a point
$P\in\partial\Omega$, let us denote by $\tau=\tau(P)$ the unit
tangent at the boundary in $P$ obtained by applying to $n$ a
counterclockwise rotation of angle $\frac{\pi}{2}$, that is
\begin{equation}
	\label{eq:2.tangent}
	\tau=e_3 \times n,
\end{equation}
where $\times$ denotes the vector product in $\R^3$ and $\{e_1,
e_2, e_3\}$ is the canonical basis in $\R^3$.

Given any connected component $\cal C$ of $\partial \Omega$ and
fixed a point $P_0\in\cal C$, let us define as positive the
orientation of $\cal C$ associated to an arclength
parameterization $\psi(s)=(x_1(s), x_2(s))$, $s \in [0, l(\cal
C)]$, such that $\psi(0)=P_0$ and $\psi'(s)=\tau(\psi(s))$. Here
$l(\cal C)$ denotes the length of $\cal C$.

Throughout the paper, we denote by $w,_\alpha$, $\alpha=1,2$,
$w,_s$, and $w,_n$ the derivatives of a function $w$ with respect
to the $x_\alpha$ variable, to the arclength $s$ and to the normal
direction $n$, respectively, and similarly for higher order
derivatives.

We denote by $\M^{2}, \M^{3}$ the Banach spaces of second order and the third order tensors and by 
$\widehat{\M}^{2}, \widehat{\M}^{3}$ the corresponding subspaces of tensors having components invariant with respect to permutations of the indexes.

Let ${\cal L} (X, Y)$ be the space of bounded linear
operators between Banach spaces $X$ and $Y$. Given $\K\in{\cal L} ({\M}^{2},{\M}^{2})$ and $A,B\in \M^{2}$, we use the following notation 
\begin{equation}
	\label{eq:2.notation_1}
	({\K}A)_{ij} = \sum_{l,m=1}^{2} K_{ijlm}A_{lm},
\end{equation}
\begin{equation}
	\label{eq:2.notation_2}
	A \cdot B = \sum_{i,j=1}^{2} A_{ij}B_{ij},
\end{equation}

Similarly, given $\K\in{\cal L} ({\M}^{3},
{\M}^{3})$ and $A,B\in \M^{3}$, we denote
\begin{equation}
	\label{eq:2.notation_1bis}
	({\K}A)_{ijk} = \sum_{l,m,n=1}^{2} K_{ijklmn}A_{lmn},
\end{equation}
\begin{equation}
	\label{eq:2.notation_2bis}
	A \cdot B = \sum_{i,j,k=1}^{2} A_{ijk}B_{ijk},
\end{equation}

Moroever, for any $A\in \M^{n}$, with $ n=2,3$, we shall denote 
\begin{equation}
	\label{eq:2notation_3}
	|A|= (A \cdot A)^{\frac {1} {2}}.
\end{equation}


%
%
%
%
%

The linear space of the infinitesimal rigid displacements is defined as
\begin{equation}
	\label{eq:def_rig_displ-1}
	{\cal R}_2 = \left \{
	r(x) = c + Wx, \ c \in \R^2, \ W \in \M^2, \ W+W^T=0
	\right \}.
\end{equation}
%

\noindent Throughout the paper, summation over repeated indexes is assumed.

\section{Size estimates results} 
\label{sec:results}

\subsection{The direct problem} 
\label{sec:direct-pbm}

Let us consider a nanoplate $\Omega \times  \left ( -\frac{t}{2}, \frac{t}{2}   \right )$ with middle surface $\Omega$ represented by a bounded domain of $\R^2$ and having constant thickness $t$, $t << diam (\Omega)$. We assume that the boundary $\partial \Omega$ of $\Omega$ is of class $C^{2,1}$ with constants $r_0$, $M_0$ and that 
\begin{equation}
	\label{eq:1-1}
	|\Omega| \leq M_1 r_0^2,
\end{equation}
where $M_1$ is a positive constant.

Within the kinematic framework of the Kirchhoff-Love theory in infinitesimal deformation, the statical equilibrium problem of the nanoplate loaded at the boundary and under vanishing body forces is described by the following Neumann boundary value problem \cite{KMZ-MMAS-2022}:
\begin{equation}
	\label{eq:1-2}
	(M_{\alpha \beta} +  \overline{M}_{\alpha \beta \gamma, \gamma}^h   )_{,\alpha \beta}=0 \quad \hbox{in } \Omega,
\end{equation}
\begin{multline}
	\label{eq:1-3}
	(
	M_{\alpha \beta} +  \overline{M}_{\alpha \beta \gamma, \gamma}^h
	)_{,\alpha} n_\beta
	+
	(
	(
	M_{\alpha \beta} +  \overline{M}_{\alpha \beta \gamma, \gamma}^h
	)n_\alpha {\tau}_\beta	
	)_{,s}
	+
	(
	\overline{M}_{\alpha \beta \gamma}^h {\tau}_\alpha {\tau}_\beta n_\gamma
	)_{,ss} -
	\\
	-
	(
	\overline{M}_{\alpha \beta \gamma}^h n_\gamma
	(  {\tau}_{\alpha,s} {\tau}_\beta - n_{\alpha,s} n_\beta )
	)	
	_{,s}
	=
	- \widehat{V}  \qquad \hbox{on } \partial \Omega,
\end{multline}
\begin{multline}
	\label{eq:1-4}
	(
	M_{\alpha \beta} +  \overline{M}_{\alpha \beta \gamma, \gamma}^h
	)n_\alpha n_\beta 
	+
	(
	\overline{M}_{\alpha \beta \gamma}^h n_\gamma ({\tau}_\alpha n_\beta + {\tau}_\beta n_\alpha )
	)_{,s}
	-
	\\
	- \overline{M}_{\alpha \beta \gamma}^h n_\gamma
	(n_{\alpha,s} {\tau}_\beta)
	=\widehat{M}_n  \qquad \hbox{on } \partial \Omega,
\end{multline}
\begin{equation}
	\label{eq:1-5}
	\overline{M}_{\alpha \beta \gamma}^h n_\alpha n_\beta n_\gamma
	=-\widehat{M}_n^h  \qquad \hbox{on } \partial \Omega.
\end{equation}
The functions $M_{\alpha \beta}= M_{\alpha \beta}(u)$, $\overline{M}_{\alpha \beta \gamma}^h=\overline{M}_{\alpha \beta \gamma}^h(u)$, $\alpha, \beta, \gamma=1,2$, in the above equations are the cartesian components of the couple tensor $M=(M_{\alpha \beta})$ and the high-order couple tensor $\overline{M}^h=(\overline{M}_{\alpha \beta \gamma}^h)$, respectively, corresponding to the transverse displacement $u(x_1,x_2)$, $u : \Omega \rightarrow \R$, of the point $(x_1,x_2)=x$ belonging to the middle surface of the nanoplate. To simplify the presentation, the dependence of these quantities on $u$ is not explicitly indicated in \eqref{eq:1-2}--\eqref{eq:1-5} and in what follows.

We assume that the functions $M_{\alpha\beta}$ can be expressed as
\begin{equation}
	\label{eq:2-1}
	M_{\alpha\beta} = -( P_{\alpha \beta \gamma \delta} + P_{\alpha \beta \gamma \delta}^h  ) u_{,\gamma \delta} \quad(M=-(\mathbb{P}+\mathbb{P}^h)D^2u)),
\end{equation}
where the fourth order tensors $\mathbb{P} = \mathbb{P}(x) \in L^\infty (\Omega,  \mathcal{L}(\widehat{\M}^2, \widehat{\M}^2) )$, $\mathbb{P}^h = \mathbb{P}^h(x) \in L^\infty (\Omega,  \mathcal{L}( \widehat{\M}^2, \widehat{\M}^2)   )$ are assumed to satisfy the symmetry conditions
\begin{equation}
	\label{eq:3-1}
	\mathbb{P}A\cdot B=\mathbb{P}B\cdot A, \quad \hbox{a.e. in } \ \Omega,
\end{equation}
\begin{equation}
	\label{eq:3-2}
	\mathbb{P}^h A\cdot B=\mathbb{P}^h B\cdot A, \quad \hbox{a.e. in } \ \Omega,
\end{equation}
for every $A,B\in \widehat{\M}^2$, and the strong convexity condition
\begin{equation}
	\label{eq:3-3}
	(\mathbb{P}+\mathbb{P}^h)A\cdot A \geq t^3\xi_ {\mathbb{P}} |A|^2, \quad \hbox{a.e. in } \ \Omega,
\end{equation}
for every $A\in \widehat{\M}^2$, where $\xi_ {\mathbb{P}}$ is a positive constant.

Concerning the functions $\overline{M}_{ijk}^h$ ($i,j,k=1,2$), we assume that they can be expressed as
\begin{equation}
	\label{eq:3-5}
	\overline{M}_{ijk}^h= Q_{ijklmn}u_{,lmn}\quad (\overline{M}^h=\mathbb{Q} D^3u),
\end{equation}
where $Q_{ijklmn}$ are the cartesian components of the sixth order tensor $\mathbb{Q}=\mathbb{Q}(x) \in L^\infty( \Omega,   \mathcal{L}(\widehat{\M}^3, \widehat{\M}^3   )    )$, and $\mathbb{Q}$ is assumed to satisfy the symmetry conditions 
\begin{equation}
	\label{eq:3-6}
	\mathbb{Q}A\cdot B = \mathbb{Q}B\cdot A, \quad \hbox{a.e. in } \ \Omega,
\end{equation}
for every $A,B\in \widehat{\M}^3$, and the strong convexity condition
\begin{equation}
	\label{eq:3-7}
	\mathbb{Q}A\cdot A \geq t^5\xi_ { \mathbb{Q}} | A|^2, \quad \hbox{a.e. in } \ \Omega,
\end{equation}
for every $A\in \widehat{\M}^3$, where $\xi_ { \mathbb{Q}}$ is a  positive constant.

On the loading data $\widehat{V}$ (shear force), $\widehat{M}_n$ (bending moment) and $\widehat{M}_n^h$ (high-order bending moment) appearing in the boundary equilibrium equations \eqref{eq:1-3}--\eqref{eq:1-5}, we require the following regularity conditions
\begin{equation}
	\label{eq:4-1}
	\widehat{V} \in H^{ - 5/2  }(\partial \Omega), \quad 
	\widehat{M}_n \in H^{ - 3/2  }(\partial \Omega), \quad 
	\widehat{M}_n^h \in H^{ - 1/2  }(\partial \Omega)
\end{equation}
and the compatibility conditions (see \cite{KMZ-MMAS-2022})
\begin{equation}
	\label{eq:4-2}
	\int_{\partial \Omega} \widehat{V} =0\ , \  \int_{\partial \Omega}  \widehat{V} x_1 +  \widehat{M}_n n_1=0\ , \ \int_{\partial \Omega}  \widehat{V} x_2 +  \widehat{M}_n n_2=0.\
\end{equation} 
The weak formulation of the Neumann problem \eqref{eq:1-2}--\eqref{eq:1-5}, with loading data satisfying \eqref{eq:4-1} and \eqref{eq:4-2}, consists in determining a function $u \in H^3(\Omega)$ (weak solution) such that
\begin{equation}
	\label{eq:4bis-1}
	a(u,w)=L(w) \ , \ \ \ \mbox{for every} \ w\in H^3(\Omega),
\end{equation}
where
\begin{multline}
	\label{eq:4bis-2}
	a(u,w)=\int_{\Omega} -M_{\alpha\beta}(u)w,_{\alpha \beta} + \overline{M}_{\alpha \beta \gamma}^h(u)w,_{\alpha \beta \gamma} =\\
	=\int_{\Omega}(\mathbb{P}+\mathbb{P}^h)D^2uD^2w+\mathbb{Q}D^3uD^3w,
\end{multline}
\begin{equation}
	\label{eq:4bis-3}
	L(w)=-\int_{\partial \Omega}  \widehat{V} w + \widehat{M}_n w,_{n} + \widehat{M}^h_n w,_{nn} .
\end{equation}
Finally, in order to identify a unique solution, we assume the following normalization conditions
\begin{equation}
	\label{eq:4-3}
	\int_{\Omega} u=0 \ , \ \ \int_{\Omega} u_{,\alpha}=0 \ , \ \ \alpha=1,2\ .
\end{equation}
We are now in position to state the existence, uniqueness and regularity
results useful in our analysis. Details of the proofs can be found in \cite{KMZ-MMAS-2022, MRSV2022}.
\begin{theo} [\bf Existence, uniqueness and $H^3$-regularity]
	\label{theo:theorem-5-1}
	Let $\Omega$ be a bounded domain in $\mathbb{R}^2$ with boundary $\partial \Omega$ of class $C^{2,1}$ with constant $r_0,M_0$.
	Let the tensors $\mathbb{P}$, $\mathbb{P}^h \in L^\infty( \Omega, \mathcal{L}(\widehat{\M}^2, \widehat{\M}^2   )  )$ and $\mathbb{Q} \in L^\infty( \Omega, \mathcal{L}(\widehat{\M}^3, \widehat{\M}^3   )  )$ satisfy the symmetry conditions \eqref{eq:3-1}, \eqref{eq:3-2}, \eqref{eq:3-6} and the strong convexity conditions \eqref{eq:3-3}, \eqref{eq:3-7}, respectively. 
	Let the data $\widehat{V}, \widehat{M}_n, \widehat{M}_n^h $ as in \eqref{eq:4-1} and satisfying the compatibility conditions \eqref{eq:4-2}.
	
	 The Neumann problem \eqref{eq:1-2}--\eqref{eq:1-5} admits a unique weak solution $u \in H^3(\Omega)$ satisfying \eqref{eq:4-3} and, moreover, 
	\begin{equation}
		\label{eq:5-1}
		\|u\|_{H^3(\Omega)}\le C \left(\|\widehat{V}\|_{H^{ - 5/2  }(\partial \Omega)} + r_0^{-1}\| \widehat{M}_n\|_{H^{ - 3/2  }(\partial \Omega)} + r_0^{-2}\|  \widehat{M}_n^h\|_{H^{ - 1/2  }(\partial \Omega)} \right)
	\end{equation}
	where the constant $C>0$ only depends on $\frac{t}{r_0}$, $M_0$, $M_1$, $\xi_{\mathbb{P}}$, $\xi_{\mathbb Q}$.
\end{theo}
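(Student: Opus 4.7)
The plan is to invoke the Lax--Milgram theorem applied to the bilinear form $a$ and the linear functional $L$ on a suitably chosen closed subspace of $H^3(\Omega)$. The first step is to identify the kernel of $a$: by the strong convexity assumptions \eqref{eq:3-3} and \eqref{eq:3-7}, $a(u,u)=0$ forces $D^2u\equiv 0$, so the kernel coincides with the three-dimensional space of affine functions on $\Omega$. The three normalization conditions \eqref{eq:4-3} are precisely $L^2$-orthogonality to this kernel, so the natural ambient space is
$$V=\left\{u\in H^3(\Omega)\ :\ \int_\Omega u=0,\ \int_\Omega u_{,\alpha}=0,\ \alpha=1,2\right\},$$
which is closed in $H^3(\Omega)$.

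Next I would verify that $L$ defines a continuous linear functional on $V$. By the trace theorem on a domain with $C^{2,1}$-regular boundary, the map $w\mapsto (w|_{\partial\Omega},w_{,n}|_{\partial\Omega},w_{,nn}|_{\partial\Omega})$ is bounded from $H^3(\Omega)$ into $H^{5/2}(\partial\Omega)\times H^{3/2}(\partial\Omega)\times H^{1/2}(\partial\Omega)$, with operator norm controlled by $r_0,M_0,M_1$; pairing with the regularity assumptions \eqref{eq:4-1} yields the sought-for bound on $L$. The compatibility conditions \eqref{eq:4-2} encode exactly the vanishing of $L$ on the affine kernel: for $w=1$ directly, and for $w=x_\alpha$ by noting that $(x_\alpha)_{,n}=n_\alpha$ and $(x_\alpha)_{,nn}=0$. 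Hence $L$ passes to the quotient by the kernel and, equivalently, restricts to a continuous functional on $V$.

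The main analytic content is the coercivity of $a$ on $V$. The strong convexity inequalities \eqref{eq:3-3} and \eqref{eq:3-7} immediately give
$$a(u,u)\ \ge\ t^3\xi_{\mathbb P}\int_\Omega |D^2u|^2 + t^5\xi_{\mathbb Q}\int_\Omega |D^3u|^2.$$
It then remains to establish a Poincar\'e-type inequality of the form $\|u\|_{H^1(\Omega)}\le Cr_0\|D^2u\|_{L^2(\Omega)}$ for $u\in V$, with $C$ depending only on $M_0$ and $M_1$. This follows from two successive applications of the standard Poincar\'e--Wirtinger inequality: the conditions $\int_\Omega u_{,\alpha}=0$ control $\|u_{,\alpha}\|_{L^2}$ by $\|D^2u\|_{L^2}$, and then $\int_\Omega u=0$ controls $\|u\|_{L^2}$ by $\|Du\|_{L^2}$. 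Combining everything one obtains $a(u,u)\ge c\|u\|_{H^3(\Omega)}^2$ with $c$ depending only on the declared constants.

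With continuity of $L$ on $V$ and coercivity of $a$, Lax--Milgram produces a unique $u\in V$ solving \eqref{eq:4bis-1}; testing with $w=u$ and combining the coercivity with the continuity bound for $L$ yields \eqref{eq:5-1}. The main obstacle to be managed is tracking the dependence of all constants on the dimensionless quantities $t/r_0, M_0, M_1, \xi_{\mathbb P}, \xi_{\mathbb Q}$ through the normalization convention \eqref{eq:1.1.1}--\eqref{eq:1.1.2}; this is where the uniform $C^{2,1}$-regularity of $\partial\Omega$ with constants $r_0,M_0$ and the bound \eqref{eq:1-1} enter, via a standard covering/scaling argument ensuring that the Poincar\'e and trace constants depend only on $M_0$ and $M_1$.
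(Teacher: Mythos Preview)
Your approach is the standard Lax--Milgram argument and is correct in outline; there is nothing substantive to compare here, since the paper does not supply its own proof of this theorem but simply refers the reader to \cite{KMZ-MMAS-2022, MRSV2022} for the details. The argument you sketch---working on the closed subspace $V$ defined by the normalizations \eqref{eq:4-3}, verifying coercivity via strong convexity plus a Poincar\'e-type inequality, and bounding $L$ through the trace theorem---is exactly the expected route for a Neumann problem of this type. One small point you leave implicit: Lax--Milgram gives $a(u,w)=L(w)$ for $w\in V$, whereas the weak formulation \eqref{eq:4bis-1} asks for it for every $w\in H^3(\Omega)$; you should remark that any $w$ splits as $w_0+p$ with $w_0\in V$ and $p$ affine, and that both $a(u,p)=0$ and $L(p)=0$ (the latter by compatibility), so the identity extends.
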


We conclude this section with a global and an improved interior regularity result.
\begin{theo}[\bf Global $H^4$-regularity]
	\label{theo:5bis-1}
	Let $\Omega$ be a bounded domain in $\R^2$ with boundary
	$\partial \Omega$ of class $C^{3,1}$ with constants $r_0$, $M_0$, and satisfying \eqref{eq:1-1}. 	Let the tensors $\mathbb{P}$, $\mathbb{P}^h \in C^{0,1}( \overline{\Omega}, \mathcal{L}(\widehat{\M}^2, \widehat{\M}^2   )  )$ and $\mathbb{Q} \in  C^{0,1}( \overline{\Omega}, \mathcal{L}(\widehat{\M}^3, \widehat{\M}^3   )  )$ satisfy the symmetry conditions \eqref{eq:3-1}, \eqref{eq:3-2}, \eqref{eq:3-6} and the strong convexity conditions \eqref{eq:3-3}, \eqref{eq:3-7}, respectively. Let $u \in H^3(\Omega)$ be the weak solution of the Neumann problem
	\eqref{eq:1-2}--\eqref{eq:1-5} satisfying \eqref{eq:4-3}, where $\widehat{V} \in H^{ - 3/2  }(\partial \Omega), \quad 
	\widehat{M}_n \in H^{ - 1/2  }(\partial \Omega), \quad \widehat{M}_n^h \in H^{  1/2  }(\partial \Omega)$ are such that the compatibility conditions \eqref{eq:4-2} are satisfied. 
		
	Then $u \in H^4(\Omega)$ and
	\begin{equation}
		\label{eq:5bis-1}
		\| u \|_{H^4(\Omega)} \leq C \left(\|\widehat{V}\|_{H^{ - 3/2  }(\partial \Omega)} + r_0^{-1}\| \widehat{M}_n\|_{H^{ - 1/2  }(\partial \Omega)} + r_0^{-2}\|  \widehat{M}_n^h\|_{H^{  1/2  }(\partial \Omega)} \right),
	\end{equation}
	where the constant $C>0$ only depends on $\frac{t}{r_0}$, $M_0$, $M_1$, $\xi_{\mathbb P}$, $\xi_{\mathbb Q}$, $\|\mathbb P\|_{C^{0,1}( \overline{\Omega})}$, $\|\mathbb P^h\|_{C^{0,1}( \overline{\Omega})}$, $\|\mathbb Q\|_{C^{0,1}( \overline{\Omega})}$.
\end{theo}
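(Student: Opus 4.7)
The plan is to upgrade the $H^3$-regularity of Theorem \ref{theo:theorem-5-1} to $H^4$ by a two-part bootstrap argument that handles interior and boundary regularity separately. The additional hypotheses ($C^{0,1}$ coefficients, $C^{3,1}$ boundary, and Neumann data one Sobolev order more regular than in Theorem \ref{theo:theorem-5-1}) are precisely what is needed to gain one derivative, and the sixth-order bilinear form $a(\cdot,\cdot)$ is strongly elliptic by \eqref{eq:3-3} and \eqref{eq:3-7}.

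For the interior step, in any $\Omega' \subset\subset \Omega$ I would fix a cutoff $\eta \in C_c^\infty(\Omega)$ with $\eta \equiv 1$ on $\Omega'$ and let $\tau_k^h$ denote the difference quotient of step $h$ in the direction $e_k$. Testing \eqref{eq:4bis-1} with $w = \tau_k^{-h}(\eta^6 \tau_k^h u)$ and exploiting the Lipschitz continuity of $\mathbb{P}, \mathbb{P}^h, \mathbb{Q}$, the symmetry conditions \eqref{eq:3-1}, \eqref{eq:3-2}, \eqref{eq:3-6} and the strong convexity \eqref{eq:3-3}, \eqref{eq:3-7}, I would absorb the principal term on the left-hand side; the commutators of $\tau_k^h$ with the Lipschitz coefficients are controlled by $\|u\|_{H^3(\Omega)}$. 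Passing to the limit $h\to 0$ yields $D^4 u \in L^2(\Omega')$ with the expected quantitative bound governed by Theorem \ref{theo:theorem-5-1}.

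For the boundary step, I would cover $\partial \Omega$ by finitely many charts in which a $C^{3,1}$ change of variables straightens $\partial\Omega$ into $\{x_2=0\}$; the transformed tensors remain $C^{0,1}$ and preserve the symmetry and coercivity. In each chart, tangential difference quotients $\tau_1^h$ commute with the tangential differentiations appearing in the bilinear form and only generate admissible commutator errors against the boundary operators in \eqref{eq:1-3}--\eqref{eq:1-5}. Testing with a localized $\tau_1^{-h}(\eta^6 \tau_1^h u)$ and invoking the improved regularity of $\widehat{V}, \widehat{M}_n, \widehat{M}_n^h$ gives $L^2$ control on every fourth derivative of $u$ containing at least one tangential differentiation. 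The remaining pure normal derivative $\partial_2^4 u$ is recovered algebraically from \eqref{eq:1-2}: the coefficient of $\partial_2^6 u$ in the principal part is $\mathbb{Q}(n^{\otimes 3})\cdot (n^{\otimes 3}) \geq t^5 \xi_\mathbb{Q}>0$ by \eqref{eq:3-7}, so the PDE can be solved pointwise for this leading normal derivative, and differentiating expresses $\partial_2^4 u$ in terms of already-controlled tangential and mixed quantities.

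The main obstacle is the boundary treatment, specifically checking that the triple \eqref{eq:1-3}--\eqref{eq:1-5} satisfies the Lopatinski--Shapiro complementing condition for the sixth-order elliptic operator in \eqref{eq:1-2}, so that the tangential difference-quotient estimate is not obstructed by the curvature-dependent contributions $\tau_{,s}, n_{,s}$ present in the natural Neumann-type boundary operators. Once this is in place, gluing the local estimates by a partition of unity subordinate to the boundary charts and using the normalization \eqref{eq:4-3} to kill the infinitesimal rigid motions produces the global estimate \eqref{eq:5bis-1}, with the constant depending only on $t/r_0$, $M_0$, $M_1$, $\xi_\mathbb{P}$, $\xi_\mathbb{Q}$ and the Lipschitz norms of $\mathbb{P}, \mathbb{P}^h, \mathbb{Q}$, as claimed.
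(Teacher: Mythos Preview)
The paper does not supply a proof of this theorem: the sentence preceding Theorems~\ref{theo:theorem-5-1}--\ref{theo:5ter-1} refers the reader to \cite{KMZ-MMAS-2022, MRSV2022} for details, and no argument appears in the present text. There is thus nothing here to compare your sketch against, and it has to be assessed on its own merits.

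Your overall plan---interior difference quotients, then boundary-flattening plus tangential difference quotients, then recovery of the missing normal derivative, all glued by a partition of unity and the normalization \eqref{eq:4-3}---is the standard route and is correct in outline. The one step that is written incorrectly is the recovery of the pure normal fourth derivative. You say the equation can be ``solved pointwise for $\partial_2^6 u$'' and that ``differentiating expresses $\partial_2^4 u$'' in terms of controlled quantities. Neither operation is available: at this stage $u$ is only in $H^3$ with one extra tangential derivative, so sixth derivatives do not exist pointwise, and differentiating a relation involving $\partial_2^6 u$ would raise the order to seven, not lower it to four. For a $2m$-th order divergence-form equation the passage from ``all $(m{+}1)$-th derivatives with a tangential index in $L^2$'' to ``$\partial_n^{\,m+1}u\in L^2$'' is carried out at the level of the weak formulation (isolating the single term $\int Q_{222222}\,u_{,222}\,w_{,222}$ and integrating the remaining pairings by parts using $\partial_1 u\in H^3$ and the Lipschitz bound on the coefficients), or---more economically, and consistent with the Lopatinski--Shapiro framework you already invoke---by applying the Agmon--Douglis--Nirenberg a priori estimate directly once the complementing condition is known.

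On that last point, what you call the ``main obstacle'' is in fact no obstacle: the boundary operators in \eqref{eq:1-3}--\eqref{eq:1-5} are the natural (variational) Neumann conditions associated with the coercive symmetric bilinear form $a(\cdot,\cdot)$ of \eqref{eq:4bis-2}, and it is classical (Agmon~\cite{l:agmon}, Ne\v{c}as~\cite{Necas67}) that natural boundary conditions for a coercive form automatically satisfy the complementing condition. No separate verification is required.
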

\begin{theo} [{\bf Improved interior regularity}]
	\label{theo:5ter-1}
	Let $B_{\sigma}$ be an open ball in $\R^2$ centered
	at the origin and with radius $\sigma$. Let $u \in H^3(B_{\sigma})$ be such that
	\begin{equation}
		\label{eq:5ter-1}
		a(u, \varphi) = 0		   \qquad \hbox{for every }
		\varphi \in H^3_0(B_{\sigma}),
	\end{equation}
	with
	\begin{equation}
		\label{eq:5ter-2}
		a(u, \varphi) = \int_{B_{\sigma}}  (\mathbb P + \mathbb P^h) D^2u \cdot D^2 \varphi + \mathbb Q D^3 u \cdot D^3 \varphi,
	\end{equation}
	where the tensors $\mathbb P, \mathbb P^h \in C^{1,1} (\overline{B_{\sigma}}, \mathcal{L}(\widehat{\M}^2, \widehat{\M}^2   )    )$, $\mathbb Q \in C^{2,1} (\overline{B_{\sigma}}, \mathcal{L}(\widehat{\M}^3, \widehat{\M}^3   )  )$
	satisfy the symmetry conditions \eqref{eq:3-1}, \eqref{eq:3-2}, \eqref{eq:3-6} and the strong convexity conditions \eqref{eq:3-3}, \eqref{eq:3-7}, respectively. 
	
	Then $u \in H^6( B_{  \frac{\sigma}{8}  })$ and we have
	\begin{equation}
		\label{eq:5ter-3}
		\|u\|_{ H^6 (B_{\frac{\sigma}{8}})} \leq C  \|u\|_{ H^3 (B_{\sigma})},
	\end{equation}
	where $C>0$ only depends on $t$, $\xi_{\mathbb P}$, $\xi_{\mathbb Q}$, $\|\mathbb P\|_{C^{1,1}( \overline{B_\sigma})}$, $\|\mathbb P^h\|_{C^{1,1}( \overline{B_\sigma})}$, $\|\mathbb Q\|_{C^{2,1}( \overline{B_\sigma})}$.
\end{theo}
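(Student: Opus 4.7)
The plan is to apply the Nirenberg difference quotient method three times in succession, gaining one derivative of Sobolev regularity at each step, at the cost of halving the radius of the ball; the exponent $\sigma/8 = \sigma/2^3$ in the conclusion reflects exactly these three iterations. The underlying PDE, obtained by integrating by parts in \eqref{eq:5ter-2}, is a sixth order equation whose principal part is $\mathbb{Q}_{ijk\ell mn} u_{,ijk\ell mn}$; the strong convexity hypothesis \eqref{eq:3-7} furnishes the Legendre-type ellipticity needed to run the standard interior regularity machinery, while the lower order $(\mathbb{P}+\mathbb{P}^h)D^2u$ contribution is treated as a perturbation that is absorbed via Young's inequality.

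First I would prove a Caccioppoli-type step $H^3(B_\sigma)\to H^4(B_{\sigma/2})$ with a bound $\|u\|_{H^4(B_{\sigma/2})}\leq C\|u\|_{H^3(B_\sigma)}$. Fix a direction $e_\tau$, denote by $D_h^\tau$ the corresponding difference quotient, and choose a cutoff $\eta_1\in C_c^\infty(B_\sigma)$ with $\eta_1\equiv 1$ on $B_{\sigma/2}$ and $|D^k\eta_1|\leq C\sigma^{-k}$ for $k\leq 6$. The admissible test function is
\[
\varphi = -D_{-h}^\tau\!\bigl(\eta_1^{6}\, D_h^\tau u\bigr)\in H^3_0(B_\sigma),
\]
well defined since $u\in H^3(B_\sigma)$ and $\eta_1$ has compact support. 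Inserting $\varphi$ into \eqref{eq:5ter-1} and performing a discrete integration by parts transfers $D_h^\tau$ onto the factor $\mathbb{Q}D^3u$ (respectively $(\mathbb{P}+\mathbb{P}^h)D^2u$). Using the Lipschitz regularity of the coefficients to commute $D_h^\tau$ past $\mathbb{Q}$, $\mathbb{P}$, $\mathbb{P}^h$, and expanding the derivatives of $\eta_1^{6} D_h^\tau u$ via Leibniz, the strong convexity \eqref{eq:3-7} produces the coercive quantity $t^5\xi_{\mathbb{Q}}\int \eta_1^{6}|D^3 D_h^\tau u|^2$ on the left. All commutator and cutoff terms, together with the lower order $\mathbb{P}$-contributions, are controlled by $\varepsilon\int \eta_1^{6}|D^3D_h^\tau u|^2 + C_\varepsilon \|u\|_{H^3(B_\sigma)}^2$; taking $\varepsilon$ small, absorbing, and letting $h\to 0$ along each coordinate direction yields $u\in H^4(B_{\sigma/2})$ with the desired estimate.

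Next I would iterate. The function $v=\partial_\alpha u$ formally satisfies a PDE of the same structural type with modified source terms that depend on first derivatives of $\mathbb{P},\mathbb{P}^h,\mathbb{Q}$; one derivative of each is available thanks to $\mathbb{P},\mathbb{P}^h\in C^{1,1}$ and $\mathbb{Q}\in C^{2,1}$. Applying the base step to $v$ on the nested pair $B_{\sigma/4}\subset B_{\sigma/2}$ gives $v\in H^4(B_{\sigma/4})$, hence $u\in H^5(B_{\sigma/4})$. A final iteration, using the second derivatives of $\mathbb{Q}$ (available because $\mathbb{Q}\in C^{2,1}$), yields $u\in H^6(B_{\sigma/8})$ together with the quantitative bound \eqref{eq:5ter-3}.

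The principal technical obstacle is the first step: because the form is of order six rather than two, one must carry the high power $\eta_1^{6}$ through the Leibniz expansion, producing a proliferation of commutator terms between $D_h^\tau$, the cutoff, and the variable coefficients. Tracking these terms carefully and verifying that each is either of lower differential order (so controllable by $\|u\|_{H^3(B_\sigma)}$) or dominated by a small fraction of the coercive term $\int \eta_1^{6}|D^3 D_h^\tau u|^2$ is the most delicate bookkeeping; once this is organized cleanly at the base level, the two subsequent iterations and the chaining of radii are standard.
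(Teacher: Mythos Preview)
The paper does not supply its own proof of this theorem; immediately before stating Theorems \ref{theo:theorem-5-1}--\ref{theo:5ter-1} it notes that ``details of the proofs can be found in \cite{KMZ-MMAS-2022, MRSV2022}.'' Your outline---three rounds of Nirenberg difference quotients, each gaining one Sobolev derivative at the cost of halving the radius, with the coercivity \eqref{eq:3-7} of the sixth-order $\mathbb{Q}$-term driving the absorption and the fourth-order $(\mathbb{P}+\mathbb{P}^h)$-term handled as a lower-order perturbation---is the standard interior regularity argument for higher-order divergence-form elliptic equations (cf.\ Agmon \cite{l:agmon}) and is what one expects the cited reference \cite{MRSV2022} to carry out. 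Your bookkeeping of the coefficient regularity is also correct: the three iterations on the principal part consume three derivatives of $\mathbb{Q}$, and $\mathbb{Q}\in C^{2,1}$ supplies exactly that, while the $\mathbb{P},\mathbb{P}^h$ contributions, pairing against $D^2\varphi$ rather than $D^3\varphi$, sit one order below and therefore get by with $C^{1,1}$.
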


\subsection{Formulation of the inverse problem} 
\label{sec:inverse-pbm}

We consider a nanoplate $\Omega \times \left ( -\frac{t}{2},   \frac{t}{2}    \right )$ inside which a possible inclusion $D \times \left ( -\frac{t}{2},   \frac{t}{2}    \right )$ is present. The inclusion $D$ is a measurable, possibly disconnected, subset of $\Omega$. 

Let us consider elasticity tensors  $\mathbb{P}$, $\widetilde{\mathbb{P}}$, $\mathbb{P}^h$, $\widetilde{\mathbb{P}}^h \in L^\infty( \Omega, \mathcal{L}(\widehat{\M}^2, \widehat{\M}^2   )  )$ and  $\mathbb{Q}$, $\widetilde{\mathbb{Q}} \in L^\infty( \Omega, \mathcal{L}(\widehat{\M}^3, \widehat{\M}^3   )  )$ satisfying the symmetry conditions \eqref{eq:3-1}, \eqref{eq:3-2} and \eqref{eq:3-6}, respectively. 

We shall make the following a-priori assumptions on the elasticity tensors.

i) \textit{Isotropy for $\mathbb{P}$, $\mathbb{P}^h$, $\mathbb{Q}$.}

The cartesian components of $\mathbb{P}$, $\mathbb{P}^h$, $\mathbb{Q}$ are given by
\begin{equation}
	\label{eq:6-1}
	P_{\alpha \beta \gamma \delta}= B((1-\nu)\delta_{\alpha \gamma} \delta_{\beta \delta} + \nu \delta_{\alpha \beta} \delta_{\gamma\delta}
	),
\end{equation}
\begin{equation}
	\label{eq:6-2}
	P_{\alpha \beta \gamma \delta}^h= (2a_2+5a_1)\delta_{\alpha \gamma} \delta_{\beta \delta} + (-a_1 -a_2 +a_0) \delta_{\alpha \beta} \delta_{\gamma\delta},
\end{equation}
\begin{multline}
	\label{eq:7-0}
	Q_{ijklmn}= \frac{1}{3}(b_0 -3b_1)\delta_{ij}\delta_{kn}\delta_{lm}+
	\\
	+ \frac{1}{6}(b_0 -3b_1)
	(
	\delta_{ik} ( \delta_{jl} \delta_{mn} + \delta_{jm}\delta_{ln}   )
	+
	\delta_{jk} ( \delta_{il} \delta_{mn} + \delta_{im}\delta_{ln}   )
	)
	+
	Q_8 
	(
	\delta_{kn} ( \delta_{il}\delta_{jm} +\delta_{im}\delta_{jl}    )
	)
	+
	\\
	+
	Q_9
	(
	\delta_{jn} ( \delta_{il} \delta_{km} + \delta_{im}\delta_{kl}   )
	+
	\delta_{in} ( \delta_{jl} \delta_{km} + \delta_{jm}\delta_{kl}   )
	),	
\end{multline}
where $2(Q_8+2Q_9)=5b_1$.

The bending stiffness (per unit length) $B=B(x)$ is given by the function
\begin{equation}
	\label{eq:7-1}
	B(x) = \frac{t^3 E(x)}{12(1-\nu^2(x))}, \quad \hbox{a.e. in } \Omega,
\end{equation}
where the Young's modulus $E$ and the Poisson's coefficient $\nu$ of the material can be written in terms of the Lamé moduli $\mu$ and $\lambda$ as follows
\begin{equation}
	\label{eq:7-2}
	E(x) = \frac{\mu(x)(2\mu(x)+3\lambda(x))}{ \mu(x) +\lambda(x)  },
	\quad 
	\nu(x) = \frac{\lambda(x)}{2(\mu(x)+\lambda(x))}.
\end{equation}
The coefficients $a_i(x)$, $i=0,1,2$, are given by (see \cite{KMZ-MMAS-2022})
\begin{equation}
	\label{eq:7-3}
	a_0(x)=2\mu(x)t\mathit{l}_0^2, \quad a_1(x) = \frac{2}{15}\mu(x)t  \mathit{l}_1^2, \quad a_2(x) = \mu(x)t\mathit{l}_2^2 \quad \hbox{a.e. in } \ \Omega,
\end{equation}
where the material length scale parameters $\mathit{l}_i$ are assumed to be positive constants. We denote

\begin{equation}
	\label{eq:l}
	l=\min\{l_0,l_1, l_2\}.
\end{equation}

The coefficients $b_i(x)$, $i=0,1$, are given by 
\begin{equation}
	\label{eq:7-4}
	b_0(x)=2\mu(x)\frac{t^3}{12}\mathit{l}_0^2, \quad 
	b_1(x)=\frac{2}{5}\mu(x)\frac{t^3}{12}\mathit{l}_1^2
	\quad \hbox{a.e. in } \ \Omega.
\end{equation}

ii) \textit{Strong convexity for $\mathbb{P}+\mathbb{P}^h$, $\mathbb{Q}$.}

We assume the following ellipticity conditions on $\mu$ and $\lambda$:
\begin{equation}
	\label{eq:8-1}
	\mu(x) \geq \alpha_0 >0, \quad 2\mu(x)+3\lambda(x) \geq \gamma_0 >0 \quad \hbox{a.e. in } \ \Omega,
\end{equation}
where $\alpha_0$, $\gamma_0$ are positive constants. By \eqref{eq:7-3}, \eqref{eq:7-4} and \eqref{eq:8-1} we also have
\begin{equation}
	\label{eq:8-2}
	a_i(x) \geq t \mathit{l}^2 \alpha_0^h >0, \ i=0,1,2, \quad
	b_j(x) \geq t^3 \mathit{l}^2 \beta_0^h >0, \ j=0,1, \quad \hbox{a.e. in } \ \Omega,
\end{equation}
where $\alpha_0^h = \frac{2}{15}\alpha_0$ and $\beta_0^h = \frac{1}{30} \alpha_0$.

By \eqref{eq:8-1}, \eqref{eq:8-2} we obtain the following strong convexity conditions on $\mathbb{P}+\mathbb{P}^h$ and $\mathbb{Q}$. For every $A \in \widehat{\M}^2$ we have 
\begin{equation}
	\label{eq:8-3}
	(\mathbb{P}+\mathbb{P}^h) A \cdot A \geq t(t^2+l^2) 
	\xi_{\mathbb{P}} |A|^2 \quad \hbox{a.e. in } \ \Omega;
\end{equation}
for every $B \in \widehat{\M}^3$ we have 
\begin{equation}
	\label{eq:8-4}
	\mathbb{Q} B \cdot B \geq t^3l^2 
	\xi_{\mathbb{Q}} |B|^2 \quad \hbox{a.e. in } \ \Omega;
\end{equation}
where $\xi_{\mathbb{P}}$, $\xi_{\mathbb{Q}}$ are positive constants only depending on $\alpha_0$ and $\gamma_0$. 

iii) \textit{Bounds on the jumps and uniform strong convexity for $\mathbb{P}+\mathbb{P}^h$ and $\mathbb{Q}$.}

Either there exists $\eta >0$, $\overline{\eta} >0$ and $\delta >1$, $\overline{\delta}>1$ such that
\begin{equation}
	\label{eq:9-1}
	\eta (\mathbb{P}+\mathbb{P}^h) \leq
	 (\widetilde{\mathbb{P}}+\widetilde{\mathbb{P}}^h) -  (\mathbb{P}+\mathbb{P}^h)
	 \leq (\delta -1)  (\mathbb{P}+\mathbb{P}^h)  \quad \hbox{a.e. in } \ \Omega,
\end{equation}
\begin{equation}
	\label{eq:9-2}
	\overline{\eta} \mathbb{Q} \leq
	\widetilde{\mathbb{Q}} -  \mathbb{Q}
	\leq (\overline{\delta} -1)  \mathbb{Q}  \quad \hbox{a.e. in } \ \Omega,
\end{equation}
or there exists $\eta >0$, $\overline{\eta} >0$ and $0<\delta <1$, $0<\overline{\delta}<1$ such that
\begin{equation}
	\label{eq:9-3}
  \eta(\mathbb{P}+\mathbb{P}^h) \leq
	(\mathbb{P}+\mathbb{P}^h) - (\widetilde{\mathbb{P}}+\widetilde{\mathbb{P}}^h)
	\leq 	(1-\delta) (\mathbb{P}+\mathbb{P}^h)  \quad \hbox{a.e. in } \ \Omega,
\end{equation}
\begin{equation}
	\label{eq:9-4}
	\overline{\eta} \mathbb{Q} \leq
	\mathbb{Q} - \widetilde{\mathbb{Q}}
	\leq (1-\overline{\delta}) \mathbb{Q}  \quad \hbox{a.e. in } \ \Omega.
\end{equation}
Let us note that assumptions ii) and iii) ensure that $\widetilde{\mathbb{P}}+\widetilde{\mathbb{P}}^h$ and $\widetilde{\mathbb{Q}}$ are strongly convex a.e. in $\Omega$.

iv) \textit{Regularity for $\mathbb{P}$, $\mathbb{P}^h$ and $\mathbb{Q}$.}

We assume $\mathbb{P}$, $\mathbb{P}^h \in C^{1,1}(\overline{\Omega})$ and $\mathbb{Q} \in C^{2,1}(\overline{\Omega})$, with
\begin{equation}
	\label{eq:9-5}
	\|\mathbb{P}\|_{ C^{1,1}(\overline{\Omega})  }
	+
	\|\mathbb{P}^h\|_{ C^{1,1}(\overline{\Omega})  }
	+r_0^{-2}
	\|\mathbb{Q}\|_{ C^{2,1}(\overline{\Omega})  }
	\leq M_2r_0^3,
\end{equation}
with $M_2$ depending on $\frac{t}{r_0}$, $\frac{l}{r_0}$.

On the boundary data appearing in \eqref{eq:1-3}--\eqref{eq:1-5} we assume
\begin{equation}
	\label{eq:10-1}
	\widehat{V} \in H^{ - 3/2  }(\partial \Omega), \quad 
	\widehat{M}_n \in H^{ - 1/2  }(\partial \Omega), \quad 
	\widehat{M}_n^h \in H^{  1/2  }(\partial \Omega)
\end{equation}
and we obviously assume the compatibility conditions \eqref{eq:4-2}.

In what follows we denote by $u$, $u_0$ the solutions of the equilibrium problem for the nanoplate \eqref{eq:1-2}--\eqref{eq:1-5} with and without inclusion, namely $u\in H^3(\Omega)$ is the solution to \eqref{eq:1-2}--\eqref{eq:1-5} when  $M(u)=-(\chi_{\Omega \setminus D} (\mathbb{P}+\mathbb{P}^h   ) + \chi_D ( \widetilde{\mathbb{P}} + \widetilde{\mathbb{P}}^h   ))D^2u$, $\overline{M}^h(u)=(\chi_{\Omega \setminus D} \mathbb{Q} + \chi_D \widetilde{\mathbb{Q}})D^3u$ and $u_0\in H^3(\Omega)$ is the solution to \eqref{eq:1-2}--\eqref{eq:1-5} when $M(u_0)=-(\mathbb{P}+\mathbb{P}^h)D^2u_0$, $\overline{M}^h(u_0)=\mathbb{Q}D^3u_0$. Let us recall that $u$ and $u_0$ are uniquely determined by the normalization conditions \eqref{eq:4-3}.

Note that the boundary data $\widehat{V}$, $\widehat{M}_n$, $\widehat{M}_n^h$ associated to the problem for $u$ and $u_0$ are the \textit{same}. 

Finally, let us introduce the quantities
\begin{equation}
	\label{eq:11-1}
	W=L(u) = -\int_{\partial \Omega}  \widehat{V} u + \widehat{M}_n u,_{n} + \widehat{M}^h_n u,_{nn},
\end{equation}
\begin{equation}
	\label{eq:11-2}
	W_0=L(u_0) = -\int_{\partial \Omega}  \widehat{V} u_0 + \widehat{M}_n u_{0,n} + \widehat{M}^h_n u_{0,nn},
\end{equation}
which represent the works exerted by the boundary data when the inclusion $D$ is present or absent, respectively. By the weak formulation of the corresponding problems, the works $W$ and $W_0$ coincide with the strain energy stored in the deformed microplate, namely
\begin{multline}
	\label{eq:11-3}
	W= 
	\int_\Omega
	(\chi_{\Omega \setminus D} (\mathbb{P}+\mathbb{P}^h   ) + \chi_D ( \widetilde{\mathbb{P}} + \widetilde{\mathbb{P}}^h   )) D^2 u \cdot D^2 u+
	\\
	+ 
	(\chi_{\Omega \setminus D} \mathbb{Q} + \chi_D \widetilde{\mathbb{Q}})
	D^3 u \cdot D^3 u,
\end{multline}
\begin{equation}
	\label{eq:11-4}
	W_0= 
	\int_\Omega
	(\mathbb{P}+\mathbb{P}^h   ) D^2 u_0 \cdot D^2 u_0+
	\mathbb{Q}	D^3 u_0 \cdot D^3 u_0.
\end{equation}

\subsection{Main results}
\label{sec:main-results}

We are now in position to state our size estimates results for nanoplates.
\begin{theo} [{\bf Lower bound of $|D|$}]
	\label{theo:13-1}
	Let $\Omega$ be a bounded domain in $\R^2$ such that $\partial \Omega$ is of $C^{2,1}$-class with constants $r_0$, $M_0$ and satisfying \eqref{eq:1-1}. Let $D$, $D \subset \subset \Omega$, be a measurable subset of $\Omega$ satisfying 
	\begin{equation}
		\label{eq:13-1}
		dist(D, \partial \Omega) \geq d_0 r_0,
	\end{equation}
	where $d_0$ is a positive constant. Let the tensors $\mathbb{P}$, $\mathbb{P}^h$, $\widetilde{\mathbb{P}}$,  $\widetilde{\mathbb{P}}^h \in L^\infty( \Omega, \mathcal{L}(\widehat{\M}^2, \widehat{\M}^2   )  )$ and  $\mathbb{Q}$, $\widetilde{\mathbb{Q}} \in L^\infty( \Omega, \mathcal{L}(\widehat{\M}^3, \widehat{\M}^3   )  )$ satisfy the symmetry conditions \eqref{eq:3-1}, \eqref{eq:3-2} and \eqref{eq:3-6}, the strong convexity conditions \eqref{eq:3-3} and \eqref{eq:3-7}, and either the jump conditions \eqref{eq:9-1}--\eqref{eq:9-2} or \eqref{eq:9-3}--\eqref{eq:9-4}. Moreover, let the tensors $\mathbb{P}$, $\mathbb{P}^h$,  $\mathbb{Q}$ satisfy the regularity conditions iv). 
	
	If 	\eqref{eq:9-1}--\eqref{eq:9-2} hold, then we have
	\begin{equation}
		\label{eq:13-2}
		|D| \geq C_1^+ r_0^2 \frac{W_0-W}{W}.
	\end{equation}
	If, conversely, \eqref{eq:9-3}--\eqref{eq:9-4} hold, then we have
	\begin{equation}
		\label{eq:13-3}
		|D| \geq C_1^- r_0^2 \frac{W-W_0}{W_0}.
	\end{equation}
	Here the constants $C_1^+$, $C_1^-$ depend only on $\frac{t}{r_0}$, $M_0$, $M_1$, $d_0$, $\xi_{\mathbb P}$, $\xi_{\mathbb Q}$, $M_2$, $\delta$, $\overline{\delta}$.
	
	%
\end{theo}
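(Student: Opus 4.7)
The plan is to bound $|W_0 - W|$ from above by a quantity proportional to $|D|$ and then rearrange. The argument has three linked ingredients: (a) an energy identity comparing $|W_0 - W|$ to $\int_D (|D^2 u_0|^2 + |D^3 u_0|^2)$; (b) pointwise bounds on $D^2 u_0$ and $D^3 u_0$ on $D$ coming from the improved interior regularity; (c) a coercivity/duality argument converting the resulting bound into one involving $W_0$ (or $W$).

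For (a), starting from the weak formulations $a(u,v)=L(v)=a_0(u_0,v)$ and testing with $v = u_0 - u$, I would derive the pair of identities
\begin{equation*}
a(u_0 - u, u_0 - u) = \mathcal{E}(u_0) - (W_0 - W), \qquad a_0(u_0 - u, u_0 - u) = (W_0 - W) - \mathcal{E}(u),
\end{equation*}
where
\begin{equation*}
\mathcal{E}(v) := \int_D [(\widetilde{\mathbb{P}} + \widetilde{\mathbb{P}}^h) - (\mathbb{P} + \mathbb{P}^h)] D^2 v \cdot D^2 v + \int_D (\widetilde{\mathbb{Q}} - \mathbb{Q}) D^3 v \cdot D^3 v.
\end{equation*}
By strong convexity the left-hand sides are non-negative, so $W_0 - W$ is sandwiched between $\mathcal{E}(u)$ and $\mathcal{E}(u_0)$ (with a sign change in case (ii)). The jump hypothesis, combined with \eqref{eq:8-3}--\eqref{eq:8-4}, gives $|\mathcal{E}(u_0)| \leq C \int_D (|D^2 u_0|^2 + |D^3 u_0|^2)$. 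In the stiffer case (i) this immediately controls $|W_0 - W|$ in terms of $u_0$. In the softer case (ii) the direct estimate controls $|W - W_0|$ only by $\int_D (|D^2 u|^2 + |D^3 u|^2)$; to transfer this to $u_0$, I would test the interface relation
\begin{equation*}
a(u - u_0, v) = -\int_D [(\widetilde{\mathbb{P}} + \widetilde{\mathbb{P}}^h) - (\mathbb{P} + \mathbb{P}^h)] D^2 u_0 \cdot D^2 v - \int_D (\widetilde{\mathbb{Q}} - \mathbb{Q}) D^3 u_0 \cdot D^3 v
\end{equation*}
against $v = u - u_0$ and use Cauchy--Schwarz together with coercivity to obtain $\|u - u_0\|^2_{H^3(\Omega)} \leq C \int_D (|D^2 u_0|^2 + |D^3 u_0|^2)$. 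Expanding $\mathcal{E}(u)$ as a quadratic form around $u_0$ and applying Cauchy--Schwarz to the cross term then yields $|\mathcal{E}(u)| \leq C' |\mathcal{E}(u_0)|$, closing this step.

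For (b) and (c), since $u_0$ solves the unperturbed problem with Lipschitz-regular coefficients from (iv), Theorem \ref{theo:5ter-1} applied on balls $B_\sigma(x) \subset \Omega$ with $x \in D$ and $\sigma$ comparable to $d_0 r_0$ gives $u_0 \in H^6_{\rm loc}$ with a quantitative bound by $\|u_0\|_{H^3(\Omega)}$. The Sobolev embedding $H^6 \hookrightarrow C^3$ in dimension two then yields the uniform pointwise bound $\|D^2 u_0\|_{L^\infty(D)} + \|D^3 u_0\|_{L^\infty(D)} \leq C \|u_0\|_{H^3(\Omega)}$ with $C$ depending only on the a priori data. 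Combined with (a), this gives $|W_0 - W| \leq C |D| \|u_0\|^2_{H^3(\Omega)}$, and strong convexity of $a_0$ with the normalization \eqref{eq:4-3} yields $\|u_0\|^2_{H^3(\Omega)} \leq C W_0$. To move from $W_0$ to $W$ in the denominator of \eqref{eq:13-2}, I would observe that both $W$ and $W_0$ are comparable to the squared dual norm of the loading functional $L$ (with constants depending only on the a priori data), hence $W \sim W_0$. Case (ii) is fully symmetric.

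The main obstacle will be case (ii): the natural energy identity produces integrals of $|D^2 u|^2$ and $|D^3 u|^2$ over $D$, which cannot be estimated pointwise via interior regularity because the coefficient tensors jump across $\partial D$. The auxiliary $H^3$-estimate on $u - u_0$ is the key technical point that transfers the analysis back to $u_0$, where Theorem \ref{theo:5ter-1} applies, at the cost of a worse constant.
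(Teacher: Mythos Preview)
Your proposal is correct and follows essentially the same route as the paper: an energy comparison (the paper's Lemma~\ref{lem:16.1}) yielding $|W_0-W|\le C\int_D(|D^2u_0|^2+|D^3u_0|^2)$, followed by interior $H^6$-regularity (Theorem~\ref{theo:5ter-1}), Sobolev embedding, and Poincar\'e to convert this to $C|D|\,W_0/r_0^2$. The only minor deviations are that in case~(ii) you control $\int_D|D^ku|^2$ via a direct $H^3$-bound on $u-u_0$, whereas the paper uses a Young-inequality/absorption argument inside the energy identities (its \eqref{eq:LE9-1}), and you make explicit the comparability $W\sim W_0$ (needed for the denominator $W$ in \eqref{eq:13-2}) which the paper leaves implicit.
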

\begin{theo} [{\bf Upper bound of $|D|$ for fat inclusions}]
	\label{theo:14-1}
	Let $\Omega$ be a bounded domain in $\R^2$ such that $\partial \Omega$ is of $C^{3,1}$-class with constants $r_0$, $M_0$ and satisfying \eqref{eq:1-1}. Let $D$ be a measurable subset of $\Omega$ satisfying 
	\begin{equation}
		\label{eq:14-1}
		|D_{h_1r_0}| \geq \frac{1}{2} |D|,
	\end{equation}
	for a given positive constant $h_1$. Let the tensors $\mathbb{P}$, $\mathbb{P}^h \in C^{1,1}( \overline{\Omega}, \mathcal{L}(\widehat{\M}^2, \widehat{\M}^2   )  )$ and $\mathbb{Q} \in C^{2,1}( \overline{\Omega}, \mathcal{L}(\widehat{\M}^3, \widehat{\M}^3   )  )$ satisfy the isotropy conditions \eqref{eq:6-1}, \eqref{eq:6-2} and \eqref{eq:7-0}, respectively, and let the Lamé moduli $\mu$ and $\lambda$ satisfy the strong convexity conditions \eqref{eq:8-1}. Let the tensors 
	 $\widetilde{\mathbb{P}}$,  $\widetilde{\mathbb{P}}^h \in L^\infty( \Omega, \mathcal{L}(\widehat{\M}^2, \widehat{\M}^2   )  )$ and  $\widetilde{\mathbb{Q}} \in L^\infty( \Omega, \mathcal{L}(\widehat{\M}^3, \widehat{\M}^3   )  )$ and let us assume the jump conditions iii).
	
	If 	\eqref{eq:9-1}--\eqref{eq:9-2} hold, then we have
	\begin{equation}
		\label{eq:14-2}
		|D| \leq C_2^+ r_0^2 \frac{W_0-W}{W_0}.
	\end{equation}
	If, conversely, \eqref{eq:9-3}--\eqref{eq:9-4} hold, then we have
	\begin{equation}
		\label{eq:14-3}
		|D| \leq C_2^- r_0^2 \frac{W-W_0}{W_0}.
	\end{equation}
	Here the constants $C_2^+$, $C_2^-$ depend only on $\frac{t}{r_0}$, $\frac{l}{r_0}$, $M_0$, $M_1$, $h_1$, $\alpha_0$, $\gamma_0$, $M_2$, $\eta$, $\overline{\eta}$, $\delta$, $\overline{\delta}$ and on the ratio
	\begin{equation}
		\label{eq:14-4}
		F = 
		\frac
		{\|\widehat{V}\|_{H^{ - 3/2  }(\partial \Omega)} + r_0^{-1}\| \widehat{M}_n\|_{H^{ - 1/2  }(\partial \Omega)} + r_0^{-2}\|  \widehat{M}_n^h\|_{H^{  1/2  }(\partial \Omega)}}
		{\|\widehat{V}\|_{H^{ - 5/2  }(\partial \Omega)} + r_0^{-1}\| \widehat{M}_n\|_{H^{ - 3/2  }(\partial \Omega)} + r_0^{-2}\|  \widehat{M}_n^h\|_{H^{ - 1/2  }(\partial \Omega)}}.
	\end{equation}
\end{theo}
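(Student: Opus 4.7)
The plan is to couple the energy comparison of Lemma \ref{lem:16.1} with the Lipschitz propagation of smallness of Proposition \ref{LPS} through a covering of the fat core $D_{h_1 r_0}$, following the scheme that has become standard for size estimates in second-order inverse problems and adapting it to the sixth-order nanoplate equation.

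First I will invoke Lemma \ref{lem:16.1} under the jump hypothesis \eqref{eq:9-1}--\eqref{eq:9-2} to get
\[
W_0 - W \;\geq\; c_1 \int_D \bigl(|D^2 u_0|^2 + |D^3 u_0|^2\bigr)\;\geq\; c_1 \int_D |D^2 u_0|^2,
\]
with $c_1>0$ depending on $\eta,\overline{\eta}$ and on the convexity constants. The opposite case \eqref{eq:9-3}--\eqref{eq:9-4} is handled identically after swapping the roles of $W$ and $W_0$, yielding \eqref{eq:14-3}.

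Next I will exploit the fatness hypothesis \eqref{eq:14-1}. Since $D\subset\Omega$, every $x\in D_{h_1 r_0}$ satisfies $\mathrm{dist}(x,\partial\Omega)\geq \mathrm{dist}(x,\partial D)>h_1 r_0$. I fix $\epsilon=\gamma h_1 r_0$ with $\gamma>0$ so small that both $\chi(\epsilon/2)<h_1 r_0/2$ and $\epsilon\sqrt{2}<h_1 r_0$ hold, where $\chi$ is the constant in Proposition \ref{LPS}, and partition $\R^2$ into a grid of closed squares of side $\epsilon$. Let $\{Q_j\}_{j=1}^{N}$ be the squares that meet $D_{h_1 r_0}$ and let $x_j$ be their centres. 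The choice of $\gamma$ guarantees $Q_j\subset D$, $B_{\epsilon/2}(x_j)\subset Q_j$, and $\mathrm{dist}(x_j,\partial\Omega)>\chi(\epsilon/2)$, while the fatness condition \eqref{eq:14-1} yields $N\epsilon^2\geq|D_{h_1 r_0}|\geq |D|/2$. I then apply Proposition \ref{LPS} with $\rho=\epsilon/2$ at each $x_j$ and sum on the pairwise-disjoint balls $B_\rho(x_j)\subset D$ to obtain
\[
\int_D |D^2 u_0|^2\;\geq\;\sum_{j=1}^{N}\int_{B_\rho(x_j)}|D^2 u_0|^2\;\geq\;N\kappa\int_\Omega|D^2 u_0|^2\;\geq\;\frac{\kappa}{2\epsilon^2}|D|\int_\Omega|D^2 u_0|^2,
\]
with $\kappa>0$ depending on $\rho$ and the a priori data. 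Combining with the energy step then yields
\[
|D|\;\leq\;\frac{2\epsilon^2}{c_1\kappa}\cdot\frac{W_0-W}{\int_\Omega|D^2 u_0|^2}.
\]

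The remaining, and I expect the main, difficulty is to absorb the factor $\int_\Omega|D^2 u_0|^2$ in the denominator into $W_0$. By continuity of $\mathbb P,\mathbb P^h,\mathbb Q$ one has $W_0\leq C\bigl(\int_\Omega|D^2 u_0|^2+\int_\Omega|D^3 u_0|^2\bigr)$, so the missing ingredient reduces to a bound of the form $\int_\Omega|D^3 u_0|^2\leq C(F)\int_\Omega|D^2 u_0|^2$. This is where the ratio $F$ in \eqref{eq:14-4} enters: Theorem \ref{theo:5bis-1} controls $\|D^3 u_0\|_{L^2}^2$ from above in terms of the data norm appearing at the numerator of $F$, while the coercivity $W_0\geq c\|u_0\|_{H^3}^2$ combined with the identity $W_0=L(u_0)$ and the boundedness of $L$ on $H^3$ by the data norm at the denominator of $F$ provides the matching lower bound on $\|u_0\|_{H^3}$. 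The coupling between $|D^2 u_0|$ and $|D^3 u_0|$ induced by $\mathbb Q$, together with the delicate interplay between the thickness $t$ and the material length $l$ inherited from \eqref{eq:6-1}--\eqref{eq:7-4}, will have to be tracked carefully in order to keep the final constants in the class prescribed by the statement. The analogous chain with $W_0$ and $W$ exchanged in the energy step produces \eqref{eq:14-3} under the opposite jump hypotheses.
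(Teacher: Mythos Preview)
Your overall architecture matches the paper's proof: invoke the Energy Lemma \ref{lem:16.1}, cover $D_{h_1 r_0}$ by small squares contained in $D$, apply the Lipschitz propagation of smallness (Proposition \ref{LPS}) on an inscribed ball to get $\int_D|D^2u_0|^2\ge C|D|r_0^{-2}\int_\Omega|D^2u_0|^2$, and then convert $\int_\Omega|D^2u_0|^2$ into $W_0$. The covering step is fine (the paper applies LPS once on the square with minimal energy rather than summing over disjoint inscribed balls, but the outcome is the same).

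The genuine gap is in your last paragraph. What you need is a lower bound $\int_\Omega|D^2u_0|^2\ge cF^{-2}W_0$, and the argument you sketch does not deliver it. From ``$W_0\ge c\|u_0\|_{H^3}^2$'' and ``$W_0=L(u_0)\le C\,(\text{denominator of }F)\,\|u_0\|_{H^3}$'' you only obtain the \emph{upper} bound $\|u_0\|_{H^3}\le C\,(\text{denominator of }F)$, which goes the wrong way. Moreover, even a lower bound on $\|u_0\|_{H^3}$ would not by itself control $\int_\Omega|D^2u_0|^2$, which is the $H^2$ seminorm. Two ingredients are missing from your sketch. First, the interpolation inequality \eqref{eq:18-9}, $\|u_0\|_{H^3}\le C\|u_0\|_{H^2}^{1/2}\|u_0\|_{H^4}^{1/2}$, which (after Poincar\'e) gives $\int_\Omega|D^2u_0|^2\ge c\|u_0\|_{H^2}^2\ge c\,\|u_0\|_{H^3}^4/\|u_0\|_{H^4}^2$. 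Second, the lower bound on $\|u_0\|_{H^3}$ in terms of the denominator of $F$ is obtained not through $W_0$ but directly, as in Lemma \ref{ex71}: using the weak formulation $L(w)=a(u_0,w)$ with an $H^3$ extension $w$ of a boundary trace, one gets $(\text{denominator of }F)\le C\|u_0\|_{H^3}$. Combining these with Theorem \ref{theo:5bis-1} (which bounds $\|u_0\|_{H^4}$ by the numerator of $F$) yields
\[
\int_\Omega|D^2u_0|^2\;\ge\; c\left(\frac{\|u_0\|_{H^3}}{\|u_0\|_{H^4}}\right)^2\|u_0\|_{H^3}^2\;\ge\; cF^{-2}\bigl(\|D^2u_0\|_{L^2(\Omega)}^2+r_0^2\|D^3u_0\|_{L^2(\Omega)}^2\bigr)\;\ge\; cF^{-2}r_0^{-3}W_0,
\]
which is exactly the comparison you need.
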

\begin{theo} [{\bf Upper bound of $|D|$ for general inclusions}]
	\label{theo:15-1}
	Let $\Omega$ be a bounded domain in $\R^2$ such that $\partial \Omega$ is of $C^{3,1}$-class with constants $r_0$, $M_0$ and satisfying \eqref{eq:1-1}. Let $D$, $D \subset \subset \Omega$, be a measurable subset of $\Omega$   satisfying 
	\begin{equation}
		\label{eq:13-1bis}
		dist(D, \partial \Omega) \geq d_0 r_0,
	\end{equation}
	where $d_0$ is a positive constant.
	Let the tensors $\mathbb{P}$, $\mathbb{P}^h \in C^{1,1}( \overline{\Omega}, \mathcal{L}(\widehat{\M}^2, \widehat{\M}^2   )  )$ and $\mathbb{Q} \in C^{2,1}( \overline{\Omega}, \mathcal{L}(\widehat{\M}^3, \widehat{\M}^3   )  )$ satisfy the isotropy conditions \eqref{eq:6-1}, \eqref{eq:6-2} and \eqref{eq:7-0}, respectively, and let the Lamé moduli $\mu$ and $\lambda$ satisfy the strong convexity conditions \eqref{eq:8-1}. Let the tensors 
	$\widetilde{\mathbb{P}}$,  $\widetilde{\mathbb{P}}^h \in L^\infty( \Omega, \mathcal{L}(\widehat{\M}^2, \widehat{\M}^2   )  )$ and  $\widetilde{\mathbb{Q}} \in L^\infty( \Omega, \mathcal{L}(\widehat{\M}^3, \widehat{\M}^3   )  )$ and let us assume the jump conditions iii).
	
	If 	\eqref{eq:9-1}--\eqref{eq:9-2} hold, then we have
	\begin{equation}
		\label{eq:15-1}
		|D| \leq C_2^+ r_0^2 \left ( \frac{W_0-W}{W_0}  \right )^{1/p} .
	\end{equation}
	If, conversely, \eqref{eq:9-3}--\eqref{eq:9-4} hold, then we have
	\begin{equation}
		\label{eq:15-2}
		|D| \leq C_2^- r_0^2 \left ( \frac{W-W_0}{W_0}  \right )^{1/p}.
	\end{equation}
	Here the constants $C_2^+$, $C_2^-$ and $p>1$ depend only on $\frac{t}{r_0}$, $\frac{l}{r_0}$, $M_0$, $M_1$, $d_0$, $\alpha_0$, $\gamma_0$, $M_2$, $\eta$, $\overline{\eta}$, $\delta$, $\overline{\delta}$ and on the ratio $F$ given in \eqref{eq:14-4}.
\end{theo}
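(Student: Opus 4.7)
The plan is to build on the same energy identity that drives Theorem \ref{theo:13-1} and Theorem \ref{theo:14-1}, namely Lemma \ref{lem:16.1}, which says
\begin{equation*}
\int_D |D^2 u_0|^2 + |D^3 u_0|^2 \ \text{is comparable to}\ |W-W_0|,
\end{equation*}
but to replace the fatness hypothesis \eqref{eq:14-1} with a softer, quantitative form of unique continuation via Muckenhoupt weights. In place of the covering-by-squares argument (which badly exploits the thickened core $D_{h_1 r_0}$), I will use the fact that $w:=|D^2 u_0|^2$ is an $A_p$ weight on a subdomain that contains $D$, as formalized in Proposition \ref{prop:Ap}.

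First, I would apply Lemma \ref{lem:16.1} in the form appropriate to each jump regime \eqref{eq:9-1}--\eqref{eq:9-2} or \eqref{eq:9-3}--\eqref{eq:9-4} to obtain
\begin{equation*}
\int_D |D^2 u_0|^2 \leq C\,|W-W_0|,
\end{equation*}
with $C$ depending only on the a priori data (in particular on $\eta$, $\overline{\eta}$). At the same time, by strong convexity \eqref{eq:8-3}--\eqref{eq:8-4} and the definition \eqref{eq:11-4} of $W_0$, one has
\begin{equation*}
W_0 \leq C \int_\Omega |D^2 u_0|^2 + |D^3 u_0|^2,
\end{equation*}
and conversely, using also the coercivity and the compatibility of the loads together with a Poincaré-type inequality, $W_0$ controls $\int_\Omega |D^2 u_0|^2$ from above up to a constant.

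The central ingredient is to deduce from Proposition \ref{DoublingData} (the global doubling inequality expressed in terms of boundary data) that the weight $w=|D^2 u_0|^2$ belongs to a Muckenhoupt class $A_p$ on a set $\widetilde \Omega \subset\subset \Omega$ containing $D$ (the hypothesis $\mathrm{dist}(D,\partial\Omega)\geq d_0 r_0$ is used here to keep $D$ away from the boundary), with constants $p>1$ and $[w]_{A_p}$ depending only on the a priori data and the ratio $F$ in \eqref{eq:14-4}. Granted this, Proposition \ref{prop:Ap} yields, for every measurable $E \subset \widetilde\Omega$,
\begin{equation*}
\left(\frac{|E|}{|\widetilde\Omega|}\right)^{p} \leq [w]_{A_p}\,\frac{\int_E w}{\int_{\widetilde\Omega} w}.
\end{equation*}

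Taking $E=D$ and combining with the two displays of the first step gives
\begin{equation*}
|D|^{p} \leq C\,r_0^{2(p-1)}\,|\widetilde\Omega|\,\frac{\int_D |D^2 u_0|^2}{\int_{\widetilde\Omega} |D^2 u_0|^2} \leq C\,r_0^{2p}\,\frac{|W-W_0|}{W_0},
\end{equation*}
from which \eqref{eq:15-1} and \eqref{eq:15-2} follow after taking the $p$-th root. The main obstacle is the second step: verifying that $|D^2 u_0|^2$ satisfies a uniform doubling inequality on every disk centered in $\widetilde\Omega$, uniformly in the boundary data. This requires the three sphere inequality for $D^2 u_0$ proved in Section \ref{sec:DIandTS}, iterated along chains of disks à la Lipschitz propagation of smallness, and then the passage from the interior doubling inequality to the global one (Proposition \ref{DoublingData}) which normalizes against $F$; only then does Coifman--Fefferman's characterization translate doubling into $A_p$. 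The use of the interior $H^6$ regularity (Theorem \ref{theo:5ter-1}) is essential to make sense of the Carleman/frequency-function estimates underlying the three sphere inequality for the sixth-order operator.
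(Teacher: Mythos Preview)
Your overall plan—Energy Lemma plus the $A_p$ property of $|D^2u_0|^2$ deduced from the doubling inequality—is exactly the paper's strategy. There is, however, a genuine gap in the step where you invoke Proposition~\ref{prop:Ap}. That proposition only furnishes the $A_p$ inequality on balls $B_r(x_0)$ with $x_0\in\Omega_{\bar r r_0}$ and $r\le\frac{\vartheta}{2}\bar r r_0$; since $\vartheta<1$, such a ball has radius strictly smaller than the distance of its center to $\partial\Omega$, and in general there is no single admissible ball containing all of $D$. Consequently the global measure inequality
\[
\Bigl(\tfrac{|E|}{|\widetilde\Omega|}\Bigr)^{p}\le [w]_{A_p}\,\frac{\int_E w}{\int_{\widetilde\Omega} w}
\]
on an arbitrary $\widetilde\Omega\supset D$ does \emph{not} follow from Proposition~\ref{prop:Ap} as stated. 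The paper closes this gap by covering $D$ with small cubes $Q_j$ of side $\epsilon=\frac{\vartheta d_0}{4\sqrt2}r_0$, applying H\"older in the form $|D|\le\bigl(\int_{\cup Q_j}|D^2u_0|^{-2/(p-1)}\bigr)^{(p-1)/p}\bigl(\int_D|D^2u_0|^2\bigr)^{1/p}$, and then using the local $A_p$ estimate on each circumscribed ball $B_j$. This leaves $\min_j\int_{B_j}|D^2u_0|^2$ in the denominator, and it is the Lipschitz propagation of smallness (Proposition~\ref{LPS}), applied once more at this stage, that replaces it by $\int_\Omega|D^2u_0|^2$. In other words, LPS is used twice: implicitly inside Proposition~\ref{DoublingData} to make the doubling constant data-dependent, and explicitly in the main argument to globalize the estimate; your write-up only accounts for the first use.

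A second, smaller point: to pass from $\int_\Omega|D^2u_0|^2$ in the denominator to $W_0$ you must absorb the third-order term in \eqref{eq:11-4}. ``Coercivity plus Poincar\'e'' gives $W_0\ge c\int_\Omega|D^2u_0|^2$, which is the wrong direction here. The paper obtains the needed inequality $\int_\Omega|D^2u_0|^2\ge cF^{-2}W_0$ via the interpolation $\|u_0\|_{H^3}^2\le C\|u_0\|_{H^2}\|u_0\|_{H^4}$, combined with the global $H^4$ bound (Theorem~\ref{theo:5bis-1}) and the reverse trace estimate (Lemma~\ref{ex71}); this is precisely where the dependence on $F$ enters the constant.
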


\section{Proof of Theorem \ref{theo:13-1}}
\label{sec:Proof-lower-bound-di-D}

Let us premise the following Energy lemma, which states that the work gap $|W-W_0|$ is estimated {}from above and {}from below by the strain energy of the unperturbed referential solution $u_0$ stored in the inclusion $D$.

\begin{lem}[\textbf{Energy lemma}]
	\label{lem:16.1}
	Let $\Omega$ be a bounded domain in $\R^2$, such that $\partial \Omega$ is of $C^{2,1}$-class. Let $D$ be a measurable subset of $\Omega$. Let the tensors $\mathbb{P}$, $\mathbb{P}^h$, $\widetilde{\mathbb{P}}$,  $\widetilde{\mathbb{P}}^h \in L^\infty( \Omega, \mathcal{L}(\widehat{\M}^2, \widehat{\M}^2   )  )$ and $\mathbb{Q}$, $\widetilde{\mathbb{Q}} \in L^\infty( \Omega, \mathcal{L}(\widehat{\M}^3, \widehat{\M}^3   )  )$ satisfy the symmetry conditions \eqref{eq:3-1}, \eqref{eq:3-2}, \eqref{eq:3-6}, respectively. Let $\xi_0$, $\xi_1$, $\overline{\xi_0}$, $\overline{\xi_1}$, $  0 < \xi_0 < \xi_1$, $0 < \overline{\xi_0} < \overline{\xi_1}$, be such that
	\begin{equation}
		\label{eq:16-1}
		t^3 \xi_0 |A|^2 \leq ( \mathbb{P}(x) +\mathbb{P}^h(x) ) A \cdot A \leq t^3 \xi_1 |A|^2 \quad \hbox{for a.e. } x \in \Omega,
	\end{equation}
	\begin{equation}
		\label{eq:16-2}
		t^5 \overline{\xi_0} |B|^2 \leq \mathbb{Q}(x) B \cdot B \leq t^5 \overline{\xi_1} |B|^2 \quad \hbox{for a.e. } x \in \Omega,
	\end{equation}
for every matrix $A \in \widehat{\M}^2$ and $B \in \widehat{\M}^3$. Let the jumps $ (  \widetilde{\mathbb{P}}+\widetilde{\mathbb{P}}^h   )-( \mathbb{P}+\mathbb{P}^h   )$, $\widetilde{\mathbb{Q}} -  \mathbb{Q}$ satisfy either \eqref{eq:9-1}--\eqref{eq:9-2} or \eqref{eq:9-3}--\eqref{eq:9-4}. Let $u, u_0 \in H^3(\Omega)$ be the weak solutions to the problem \eqref{eq:1-2}--\eqref{eq:1-5}, normalized by \eqref{eq:4-3}, when the inclusion $D$ is present or absent, respectively, for the Neumann data $\widehat{V} \in H^{ - 5/2  }(\partial \Omega)$, $\widehat{M}_n \in H^{ - 3/2  }(\partial \Omega)$, $\widehat{M}_n^h \in H^{  -1/2  }(\partial \Omega)$ that fulfill the compatibility conditions \eqref{eq:4-2}.

If \eqref{eq:9-1}--\eqref{eq:9-2} hold, then
\begin{equation}
	\label{eq:17-1}
	\frac{\eta_* \xi_{0*} t^3}{\delta^*} \int_D |D^2 u_0|^2 +t^2 |D^3 u_0|^2
	\leq W_0 -W 
	\leq (\delta^*-1)\xi_1^* t^3 \int_D |D^2 u_0|^2 +t^2 |D^3 u_0|^2;
\end{equation}

if, conversely, \eqref{eq:9-3}--\eqref{eq:9-4} hold, then
\begin{equation}
	\label{eq:17-2}
	\eta_*\xi_{0*} t^3 \int_D |D^2 u_0|^2 +t^2 |D^3 u_0|^2
	\leq W -W_0 
	\leq \frac{(1-\delta_*)\xi_1^*t^3}{\delta_*} \int_D |D^2 u_0|^2 +t^2 |D^3 u_0|^2.
\end{equation}
Here $\eta_*=\min\{\eta, \overline{\eta} \}$, $\delta^*=\max\{\delta, \overline{\delta} \}$, $\xi_{0*}=\min\{\xi_0, \overline{\xi}_0 \}$, $\xi_1^*=\max\{\xi_1, \overline{\xi}_1 \}$, $\delta_*=\min\{\delta, \overline{\delta} \}$.
\end{lem}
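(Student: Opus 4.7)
The lemma is a Kohn--Vogelius-type energy identity adapted to the sixth-order form $a(\cdot,\cdot)$ of the nanoplate. Write $T:=(\widetilde{\mathbb{P}}+\widetilde{\mathbb{P}}^h)-(\mathbb{P}+\mathbb{P}^h)$, $S:=\widetilde{\mathbb{Q}}-\mathbb{Q}$, and
\begin{equation*}
\Delta(v,w):=\int_D T\,D^2v\cdot D^2w+S\,D^3v\cdot D^3w,\qquad J(v):=\Delta(v,v).
\end{equation*}
Under \eqref{eq:9-1}--\eqref{eq:9-2}, $\Delta$ is a positive semi-definite bilinear form supported on $D$; under \eqref{eq:9-3}--\eqref{eq:9-4} the positive form is $-\Delta$. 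I treat case \eqref{eq:9-1}--\eqref{eq:9-2} in detail; the other case is strictly analogous.

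The argument proceeds in three stages. \emph{First}, testing $a(u,\cdot)=L(\cdot)$ at $v=u_0$ and $a_0(u_0,\cdot)=L(\cdot)$ at $v=u$, using the symmetry of $a_0$ and the decomposition $a=a_0+\Delta$, I obtain the key identity $W_0-W=\Delta(u_0,u)$. \emph{Second}, the minimality of $u$ for $\tfrac12 a(\cdot,\cdot)-L(\cdot)$ (with minimum value $-\tfrac12 W$), evaluated at $v=u_0$, yields $J(u_0)\geq W_0-W$; symmetrically, testing the minimum of $\tfrac12 a_0(\cdot,\cdot)-L(\cdot)$ at $v=u$ gives $J(u)\leq W_0-W$. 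Combined with the pointwise jump bound $T\leq(\delta-1)(\mathbb{P}+\mathbb{P}^h)$, its analogue for $S$, and the ellipticity \eqref{eq:16-1}--\eqref{eq:16-2}, the right-hand sandwich inequality $W_0-W\leq J(u_0)$ already delivers the upper bound in \eqref{eq:17-1}. Analogously, the left-hand inequality of \eqref{eq:17-2} follows from the lower jump bounds in \eqref{eq:9-3}--\eqref{eq:9-4}.

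\emph{Third}, the sharp lower bound in \eqref{eq:17-1} (and the corresponding upper bound in \eqref{eq:17-2}) requires controlling $J(u_0)$ \emph{from above} by the work gap. The crucial observation is that $T\leq(\delta-1)(\mathbb{P}+\mathbb{P}^h)=(\delta-1)[(\widetilde{\mathbb{P}}+\widetilde{\mathbb{P}}^h)-T]$ yields $T\leq\tfrac{\delta-1}{\delta}(\widetilde{\mathbb{P}}+\widetilde{\mathbb{P}}^h)$, and similarly for $S$, so
\begin{equation*}
\Delta(v,v)\leq\frac{\delta^*-1}{\delta^*}\,a(v,v)\qquad\text{for every }v\in H^3(\Omega).
\end{equation*}
Writing $W_0-W=J(u_0)-\Delta(u_0,u_0-u)$ and applying Cauchy--Schwarz to the positive form $\Delta$ gives $\Delta(u_0,u_0-u)\leq\sqrt{J(u_0)\,\Delta(u_0-u,u_0-u)}$. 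Expanding $a(u_0-u,u_0-u)=J(u_0)-(W_0-W)$ via $a(u_0,u_0)=W_0+J(u_0)$, $a(u,u_0)=L(u_0)=W_0$ and $a(u,u)=W$, and substituting, I arrive at
\begin{equation*}
\bigl(J(u_0)-(W_0-W)\bigr)^2\leq\frac{\delta^*-1}{\delta^*}\,J(u_0)\bigl(J(u_0)-(W_0-W)\bigr),
\end{equation*}
which collapses to $W_0-W\geq J(u_0)/\delta^*$. Combined with the lower bound $J(u_0)\geq\eta_*\xi_{0*}t^3\int_D(|D^2u_0|^2+t^2|D^3u_0|^2)$ produced by the lower jump bounds and \eqref{eq:16-1}--\eqref{eq:16-2}, this closes \eqref{eq:17-1}. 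The mirror argument in case \eqref{eq:9-3}--\eqref{eq:9-4}, using $T'\leq\tfrac{1-\delta}{\delta}(\widetilde{\mathbb{P}}+\widetilde{\mathbb{P}}^h)$, yields $W-W_0\leq J'(u_0)/\delta_*$, and the upper bound in \eqref{eq:17-2} follows.

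\textbf{Main obstacle.} The non-trivial move is the sharp constant $1/\delta^*$ (resp. $1/\delta_*$): the naive estimate $\Delta\leq(\delta^*-1)a_0$ only produces a weaker factor $1/(\delta^*+1)$ in the quadratic inequality. One must instead bound $\Delta$ against the \emph{perturbed} form $a$, which is the self-referential reformulation that makes the Cauchy--Schwarz argument close with the claimed constant.
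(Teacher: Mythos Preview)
Your proof is correct and reaches exactly the same sharp inequality $J(u_0)\leq\delta^*(W_0-W)$ (and $W-W_0\leq J'(u_0)/\delta_*$ in the soft case) as the paper, but via a different algebraic route in the crucial lower-bound step. The paper decomposes $D^ku_0=D^ku-D^k(u-u_0)$ inside $\Delta$, applies Young's inequality with the optimal parameters $\epsilon=(\delta-1)^{-1}$, $\overline\epsilon=(\overline\delta-1)^{-1}$, and then recognises the resulting right-hand side as $\delta^*\bigl[a_0(u_0-u,u_0-u)+J(u)\bigr]$, which equals $\delta^*(W_0-W)$ by their second identity \eqref{eq:LE7-1}. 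You instead use Cauchy--Schwarz on the positive form $\Delta$, the pointwise bound $\Delta\leq\frac{\delta^*-1}{\delta^*}a$ (obtained by rewriting the upper jump condition against the \emph{perturbed} tensors), and the expansion $a(u_0-u,u_0-u)=J(u_0)-(W_0-W)$, which is the content of the paper's first identity \eqref{eq:LE6-2}. Both arguments are standard in the size-estimates literature; the paper's Young-inequality route is slightly more direct computationally, while your Cauchy--Schwarz route makes the role of the perturbed bilinear form $a$ more transparent and explains conceptually why comparing $\Delta$ to $a$ rather than to $a_0$ is what produces the sharp constant $1/\delta^*$.
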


\begin{rem}
Let us note that if the materials constituting the inclusion $D$ and the surrounding material in $\Omega \setminus D$ are isotropic with Lamé moduli $\widetilde{\mu}$, $\widetilde{\lambda}$ and $\mu$, $\lambda$, respectively, then the jump conditions \eqref{eq:9-1}, \eqref{eq:9-2} and \eqref{eq:9-3}, \eqref{eq:9-4} can be written in terms of the difference $\widetilde{\mu} - \mu$ and $\widetilde{\kappa} - \kappa$, where $\widetilde{\kappa} = \frac{2\widetilde{\mu}(2 \widetilde{\mu} +3\widetilde{\lambda})  }{2 \widetilde{\mu} +\widetilde{\lambda}}$, ${\kappa} = \frac{2{\mu}(2 {\mu} +3{\lambda})  }{2 {\mu} +{\lambda}}$.
\end{rem}
	
\begin{proof}
Let us assume conditions \eqref{eq:9-1}--\eqref{eq:9-2} (i.e. the material of the inclusion $D$ is stiffer than the surrounding material in $\Omega \setminus D$) and prove inequalities \eqref{eq:17-1}. The proof of the estimates \eqref{eq:17-2} is similar.

We start by determining some basic identities. Let us denote by $u_1 \in H^3(\Omega)$ and $u_2\in H^3(\Omega)$ the weak solution to \eqref{eq:1-2}--\eqref{eq:1-5} when the inclusion $D_1$ or $D_2$ is present, respectively. Since the boundary data are the same, for every $w \in H^3(\Omega)$ we have
\begin{multline}
	\label{eq:LE3-3}
	\int_\Omega 
	((\mathbb{P}+\mathbb{P}^h)+\chi_{D_1}\mathbb{H}_{\mathbb{P}}) 
	D^2 u_1 \cdot D^2 w +
	(\mathbb{Q}+\chi_{D_1}\mathbb{H}_{\mathbb{Q}}) 
	D^3 u_1 \cdot D^3 w
	=
	\\
	=
	\int_\Omega 
	((\mathbb{P}+\mathbb{P}^h)+\chi_{D_2}\mathbb{H}_{\mathbb{P}}) 
	D^2 u_2 \cdot D^2 w +
	(\mathbb{Q}+\chi_{D_2}\mathbb{H}_{\mathbb{Q}}) 
	D^3 u_2 \cdot D^3 w,
\end{multline}
where we have defined
\begin{equation}
	\label{eq:def-salto}
	\mathbb{H}_{\mathbb{P}} = (\widetilde{\mathbb{P}}+\widetilde{\mathbb{P}}^h) -  (\mathbb{P}+\mathbb{P}^h), \quad 
	\mathbb{H}_{\mathbb{Q}} = \widetilde{\mathbb{Q}}
	-
	{\mathbb{Q}}.
\end{equation}
Note that the tensors $\mathbb{H}_{\mathbb{P}}$, $\mathbb{H}_{\mathbb{Q}}$ satisfy the symmetry conditions  \eqref{eq:3-1}--\eqref{eq:3-2}) and \eqref{eq:3-6}, respectively.

By subtracting $\int_\Omega 
((\mathbb{P}+\mathbb{P}^h)+\chi_{D_1}\mathbb{H}_{\mathbb{P}}) 
D^2 u_2 \cdot D^2 w +
(\mathbb{Q}+\chi_{D_1}\mathbb{H}_{\mathbb{Q}}) 
D^3 u_2 \cdot D^3 w $ to both sides of \eqref{eq:LE3-3} we obtain
\begin{multline}
	\label{eq:LE4-1}
	\int_\Omega 
	((\mathbb{P}+\mathbb{P}^h)+\chi_{D_1}\mathbb{H}_{\mathbb{P}}) 
	D^2 (u_1-u_2) \cdot D^2 w +
	(\mathbb{Q}+\chi_{D_1}\mathbb{H}_{\mathbb{Q}}) 
	D^3 (u_1-u_2) \cdot D^3 w
	=
	\\
	=
	\int_\Omega 
	(\chi_{D_2}-\chi_{D_1})
	(\mathbb{H}_{\mathbb{P}}
	D^2 u_2 \cdot D^2 w +
	\mathbb{H}_{\mathbb{Q}}
	D^3 u_2 \cdot D^3 w),
\end{multline}
for every $w \in H^3(\Omega)$.

Let us choose $w=u_1$ in \eqref{eq:LE4-1}. By using the symmetry conditions on $\mathbb{P}$, $\mathbb{P}^h$, $\mathbb{Q}$, $\mathbb{H}_{\mathbb{P}}$ and $\mathbb{H}_{\mathbb{Q}}$, and considering $u_1-u_2$ as test function in the weak formulation for $u_1$, we have
\begin{multline}
	\label{eq:LE5-1}
	-\int_{\partial \Omega}  \widehat{V} (u_1-u_2) + \widehat{M}_n (u_{1,n} - u_{2,n}) + \widehat{M}^h_n (u_{1,nn} - u_{2,nn})
	=
	\\
	=
	\int_\Omega 
	(\chi_{D_2}-\chi_{D_1})
	(\mathbb{H}_{\mathbb{P}}
	D^2 u_2 \cdot D^2 u_1 +
	\mathbb{H}_{\mathbb{Q}}
	D^3 u_2 \cdot D^3 u_1).
\end{multline}
Next, we choose $w=u_1-u_2$ in \eqref{eq:LE4-1}. By using \eqref{eq:LE5-1}, after simple algebra we obtain the following identity
\begin{multline}
	\label{eq:LE6-1}
	\int_\Omega 
	((\mathbb{P}+\mathbb{P}^h)+\chi_{D_1}\mathbb{H}_{\mathbb{P}}) 
	D^2 (u_1-u_2) \cdot D^2 (u_1-u_2) +
	(\mathbb{Q}+\chi_{D_1}\mathbb{H}_{\mathbb{Q}}) 
	D^3 (u_1-u_2) \cdot D^3 (u_1-u_2)
	+
	\\
	+
	\int_{D_2 \setminus D_1} 
	\mathbb{H}_{\mathbb{P}}
	D^2 u_2 \cdot D^2 u_2 +
	\mathbb{H}_{\mathbb{Q}}
	D^3 u_2 \cdot D^3 u_2
	=
	\\
	=
	-\int_{\partial \Omega}  \widehat{V} (u_1-u_2) + \widehat{M}_n (u_{1,n} - u_{2,n}) + \widehat{M}^h_n (u_{1,nn} - u_{2,nn})
	+
	\\
	+
	\int_{D_1 \setminus D_2} 
	\mathbb{H}_{\mathbb{P}}
	D^2 u_2 \cdot D^2 u_2 +
	\mathbb{H}_{\mathbb{Q}}
	D^3 u_2 \cdot D^3 u_2.
\end{multline}
By choosing $D_1=D$ (i.e. $u_1=u$) and $D_2=\emptyset$ (i.e. $u_2=u_0$) in \eqref{eq:LE6-1}, we obtain the first fundamental identity
\begin{multline}
	\label{eq:LE6-2}
	\int_\Omega 
	((\mathbb{P}+\mathbb{P}^h)\chi_{\Omega \setminus D} +
	(\widetilde{\mathbb{P}}+ \widetilde{\mathbb{P}}^h  )\chi_{D})
	D^2 (u-u_0) \cdot D^2 (u-u_0) +
	\\
	+ \int_\Omega 
	(\mathbb{Q}\chi_{\Omega \setminus D}+
	\widetilde{\mathbb{Q}}\chi_D) 
	D^3 (u-u_0) \cdot D^3 (u-u_0)
	-
	\\
	-
	\int_{D} 
	\mathbb{H}_{\mathbb{P}}
	D^2 u_0 \cdot D^2 u_0 +
	\mathbb{H}_{\mathbb{Q}}
	D^3 u_0 \cdot D^3 u_0=
	\\
	=
	-\int_{\partial \Omega}  \widehat{V} (u-u_0) + \widehat{M}_n (u_{,n} - u_{0,n}) + \widehat{M}^h_n (u_{,nn} - u_{0,nn})=
	\\
	=W-W_0.
\end{multline}
A second fundamental identity is obtained by choosing $D_1= \emptyset$ ($u_1=u_0$) and $D_2=D$ ($u_2=u$) in \eqref{eq:LE6-1}:
\begin{multline}
	\label{eq:LE7-1}
	\int_\Omega 
	(\mathbb{P}+\mathbb{P}^h) 
	D^2 (u_0-u) \cdot D^2 (u_0-u) +
	\mathbb{Q}	D^3 (u_0-u) \cdot D^3 (u_0-u)
	+
	\\
	+
	\int_{D} 
	\mathbb{H}_{\mathbb{P}}
	D^2 u \cdot D^2 u +
	\mathbb{H}_{\mathbb{Q}}
	D^3 u \cdot D^3 u
	=
	\\
	=
	-\int_{\partial \Omega}  \widehat{V} (u_0-u) + \widehat{M}_n (u_{0,n} - u_{,n}) + \widehat{M}^h_n (u_{0,nn} - u_{,nn})=
	\\
	=W_0-W.
\end{multline}
Next, let us choose $w=u_0$ as test function in the weak formulation \eqref{eq:4bis-1} of the Neumann problem \eqref{eq:1-2}--\eqref{eq:1-5} when the inclusion $D$ is present, obtaining
\begin{multline}
	\label{eq:LE7-2}
	\int_\Omega 
	((\mathbb{P}+\mathbb{P}^h) +
	\chi_D \mathbb{H}_{ \mathbb{P} })
	D^2 u \cdot D^2 u_0 +
	(\mathbb{Q}	+ \chi_D  \mathbb{H}_{ \mathbb{Q} }) D^3 u \cdot D^3 u_0 =
	\\
	=
	-\int_{\partial \Omega}  \widehat{V} u_0 + \widehat{M}_n u_{0,n} + \widehat{M}^h_n u_{0,nn}.
\end{multline}
Conversely, choosing the solution $u$ of \eqref{eq:1-2}--\eqref{eq:1-5} when the inclusion $D$ is present as test function in the weak formulation \eqref{eq:4bis-1} when the inclusion is absent, we have
\begin{equation}
	\label{eq:LE7-3}
	\int_\Omega 
	(\mathbb{P}+\mathbb{P}^h)
	D^2 u_0 \cdot D^2 u +
	\mathbb{Q} D^3 u_0 \cdot D^3 u 
	=
	-\int_{\partial \Omega}  \widehat{V} u + \widehat{M}_n u_{,n} + \widehat{M}^h_n u_{,nn}.
\end{equation}
By subtracting \eqref{eq:LE7-3} {}from \eqref{eq:LE7-2}, we obtain a third fundamental identity
\begin{multline}
	\label{eq:LE7-4}
	\int_D 
	\mathbb{H}_{ \mathbb{P} }
	D^2 u \cdot D^2 u_0 +
	\mathbb{H}_{ \mathbb{Q} } D^3 u \cdot D^3 u_0 =
	\\
	=
	\int_{\partial \Omega}  \widehat{V} (u_0-u) + \widehat{M}_n (u_{0,n} - u_{,n})+ \widehat{M}^h_n (u_{0,nn}-u_{,nn}).
\end{multline}
We are now in position to derive the estimates \eqref{eq:17-1}.

Using the positivity of $\mathbb{P}+\mathbb{P}^h$, $\widetilde{\mathbb{P}}+ \widetilde{\mathbb{P}}^h$, $\mathbb{Q}$ and $\widetilde{\mathbb{Q}}$, {}from the first identity \eqref{eq:LE6-2} we obtain 
\begin{multline}
	\label{eq:LE8-1}
	W_0 - W = -\int_{\partial \Omega}  \widehat{V} (u_0-u) + \widehat{M}_n (u_{0,n}-u_{,n}) + \widehat{M}^h_n (u_{0,nn} - u_{,nn})
	\leq
	\\
	\leq
	\int_{D} 
	\mathbb{H}_{\mathbb{P}}
	D^2 u_0 \cdot D^2 u_0 +
	\mathbb{H}_{\mathbb{Q}}
	D^3 u_0 \cdot D^3 u_0
\end{multline}
and the estimate from above of the work gap $W_0-W$ in \eqref{eq:17-1} easily follows {}from \eqref{eq:9-1}--\eqref{eq:9-2} and {}from \eqref{eq:16-1}--\eqref{eq:16-2}.

To get the estimate {}from below of $W_0-W$, we use the following inequality
\begin{multline}
	\label{eq:LE9-1}
	\int_{D} 
	\mathbb{H}_{\mathbb{P}}
	D^2 u_0 \cdot D^2 u_0 +
	\mathbb{H}_{\mathbb{Q}}
	D^3 u_0 \cdot D^3 u_0
	\leq
	\\
	\leq
	(1+\epsilon) \int_{D}
	\mathbb{H}_{\mathbb{P}} D^2 (u-u_0) \cdot D^2 (u-u_0)
	+
	\left (  1+ \frac{1}{\epsilon}  \right )
	\int_{D}
	\mathbb{H}_{\mathbb{P}} D^2 u \cdot D^2 u
	+
	\\
	+
	(1+\overline{\epsilon}) \int_{D}
	\mathbb{H}_{\mathbb{Q}} D^3 (u-u_0) \cdot D^3 (u-u_0)
	+
	\left (  1+ \frac{1}{\overline{\epsilon}}  \right )
	\int_{D}
	\mathbb{H}_{\mathbb{Q}} D^3 u \cdot D^3 u,
\end{multline}
for every $\epsilon >0$, $\overline{\epsilon} >0$. The above inequality follows by the symmetry properties \eqref{eq:3-1}, \eqref{eq:3-2}, \eqref{eq:3-6}  and the positivity conditions \eqref{eq:9-1}, \eqref{eq:9-2}.

By using the jump conditions \eqref{eq:9-1}--\eqref{eq:9-2}, by choosing $\epsilon= (\delta -1)^{-1}$, $\overline{\epsilon}=	\left ( \overline{\delta} -1 \right )^{-1}$ in \eqref{eq:LE9-1}, and employing identity \eqref{eq:LE7-1} we get
\begin{multline}
	\label{eq:lower_bound_work_gap}
	\int_{D} 
	\mathbb{H}_{\mathbb{P}}
	D^2 u_0 \cdot D^2 u_0 +
	\mathbb{H}_{\mathbb{Q}}
	D^3 u_0 \cdot D^3 u_0
	\leq
	\\
	\leq
	(1+\epsilon) \int_{D}
	(\delta -1)( \mathbb{P} + \mathbb{P}^h)  D^2 (u-u_0) \cdot D^2 (u-u_0)
	+
	\left (  1+ \frac{1}{\epsilon}  \right )
	\int_{D}
	\mathbb{H}_{\mathbb{P}} D^2 u \cdot D^2 u
	+
	\\
	+
	(1+\overline{\epsilon}) \int_{D}
	(\overline{\delta}-1)
	\mathbb{Q} D^3 (u-u_0) \cdot D^3 (u-u_0)
	+
	\left (  1+ \frac{1}{\overline{\epsilon}}  \right )
	\int_{D}
	\mathbb{H}_{\mathbb{Q}} D^3 u \cdot D^3 u
	\leq
	\\
	\leq
	\max\{\delta, \overline{\delta}\}
	\left \{ 
	\int_\Omega 
	(\mathbb{P}+\mathbb{P}^h) 
	D^2 (u_0-u) \cdot D^2 (u_0-u) +
	\mathbb{Q}	D^3 (u_0-u) \cdot D^3 (u_0-u)
	\right.
	+
	\\
	+
	\left .
	\int_{D} 
	\mathbb{H}_{\mathbb{P}}
	D^2 u \cdot D^2 u +
	\mathbb{H}_{\mathbb{Q}}
	D^3 u \cdot D^3 u
	\right \}
	= \max\{\delta, \overline{\delta}\}(W_0 -W).
\end{multline}
Finally, the estimate from below of $W_0-W$ in \eqref{eq:17-1} follows from \eqref{eq:lower_bound_work_gap} and \eqref{eq:9-1}--\eqref{eq:9-2}.
\end{proof}	

\begin{proof}[Proof of Theorem \ref{theo:13-1}]
To fix the ideas, let us assume that the jump conditions \eqref{eq:9-1}--\eqref{eq:9-2} hold. Let us estimate the right hand side of \eqref{eq:17-1}. Let us notice that there exists $d^{*}, 0<d^{*}<d_0$, only depending on $M_0$, such that $\Omega_{d^*r_0}$ is of Lipschitz class with constants $\gamma r_0$, $\gamma'M_0$, where $\gamma<1$ and $\gamma'>1$ only depend on $M_0$, and $D\subset \Omega_{d^*r_0}$(see \cite[Lemma 14.16]{G-Tr} for details). By Sobolev Imbedding Theorem (\cite[Chap.5, Theorem 5.4]{Adams}), interior regularity estimates \eqref{eq:5ter-3}, standard Poincar\`e inequality (see \cite[Proposition 3.3]{MRV2007} ), by \eqref{eq:9-5} and \eqref{eq:13-1}, we obtain
 \begin{eqnarray}
 &&W_0-W \le C r_0^3\int_{D} |{D}^2 u_0|^2 + r_0^2|{D}^3 u_0|^2 \le C r_0^3 |D| \left(\|{D}^2 u\|^2_{L^{\infty}(D)} + r_0^2 \| {D}^3 u\|^2_{L^{\infty}(D)} \right)\le\nonumber  \\
 &&C \frac{1}{r_0} |D| \|u_0\|^2_{H^6({\Omega}_{d^*r_0})}\le C \frac{1}{r_0} |D| \|u_0\|^2_{H^3({\Omega})}\le C r_0 |D| \int_{\Omega} |D^2 u_0|^2 + r_0^2|D^3 u_0|^2\le \nonumber \\
&& \frac{|D|}{r_0^2}\int_{\Omega} (\mathbb{P}+\mathbb{P}^h)D^2 u_0\cdot D^2 u_0 + \mathbb{Q}D^3 u_0\cdot D^3 u_0 = \frac{C}{r_0^2} |D| W_0 \ , 
 \end{eqnarray}
where $C>0$ depends on $d_0, \delta, \bar{\delta}, \frac{t}{r_0}, M_0, M_1, \xi_{\mathbb{P}}, \xi_{\mathbb{Q}}, M_2.$ Hence, estimate \eqref{eq:13-2} follows.

\end{proof}

\section{Doubling and three spheres inequality for the Hessian}	\label{sec:DIandTS}
This section is devoted to strong unique continuation estimates for solutions to equation \eqref{eq:1-2} for the isotropic case only. Such estimates are given in the form of  doubling inequality and three spheres inequality for the hessian of the solutions. The latter are crucial tools of unique continuation needed in the proof of upper bound estimates for the size of both the considered inclusions. We shall premise the proof of the main result of this section with some auxiliary results which are contained in \cite{MRSV2022}.

For simplicity of notation in this Section we denote by $u$ a weak solution to the partial differential equation \eqref{eq:1-2}.

\begin{prop} [{\bf Doubling inequality and three sphere inequality for solutions to \eqref{eq:1-2}}]
\label{prop:SUCP-1}
Let $\mathbb{P}, \mathbb{P}^h \in C^{1,1}(\overline{B_1}, {\cal L} (\widehat{\mathbb{M}}^2, \widehat{\mathbb{M}}^2)), \mathbb{Q} \in C^{2,1}(\overline{B_1}, {\cal L} (\widehat{\mathbb{M}}^3,\widehat{\mathbb{M}}^3))$ be given by \eqref{eq:6-1}, \eqref{eq:6-2}, \eqref{eq:7-0} and satisfying the regularity condition \eqref{eq:9-5}, the strong convexity conditions \eqref{eq:8-3}, \eqref{eq:8-4},  respectively. 
Let $u\in H^6(B_1)$ be a weak solution to \eqref{eq:1-2}. 

Then there exists an absolute constant $R_1\in(0,1]$ such that for every $r\leq s\leq\frac{R_1}{2^8}$, we have

\begin{equation}\label{SUCP-1}
    \int_{B_{s}}u^2\leq
    CN^{\overline{k}}\left(\frac{s}{r}\right)^{\log_2\left(CN^{\overline{k}}\right)} \int_{B_r} u^2,
\end{equation}
where $N$ is given by 
\begin{equation*}
N= \frac{\int_{B_{R_1}} u^2}{\int_{B_{{R_1}/{2^7}}}u^2} .
\end{equation*}

In addition, if $2r\leq s\leq\frac{R_1}{2^8}$ then we have

\begin{equation}\label{Tre-sfere-U}
    \int_{B_{s}}u^2\leq
    \left(C\int_{B_{R_1}}u^2\right)^{1-\widetilde{\theta}(s,r)}\left(\int_{B_r}u^2\right)^{\widetilde{\theta}(s,r)},
\end{equation}
where
\begin{equation*}
   \widetilde{\theta}(s,r)=\frac{1}{1+2\overline{k}\log_2\frac{s}{r}}
\end{equation*}
(with $\bar{k}=8$) and $C$ only depends on $M_2, \alpha_0, \gamma_0, t, l$. 
\end{prop}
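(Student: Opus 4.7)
The overall strategy is to reduce everything to a Carleman estimate for the sixth order elliptic operator appearing in \eqref{eq:1-2}. In the isotropic case, formulas \eqref{eq:6-1}--\eqref{eq:7-0} force the principal symbol of this operator to be proportional to $\Delta^3$, with a coefficient that is bounded away from zero because of \eqref{eq:8-3}, \eqref{eq:8-4}, and that has Lipschitz regularity thanks to \eqref{eq:9-5}. Lower order terms are of order at most five with bounded coefficients and can therefore be absorbed in a Carleman inequality once the large parameter $\tau$ is chosen big enough. The existence of such a Carleman estimate with a singular weight at the origin of the form $\phi_\tau(x)=e^{-\tau\Phi(|x|)}$ for a suitable non-increasing convex profile $\Phi$ is precisely the auxiliary input imported from \cite{MRSV2022}. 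The $H^6$ interior regularity guaranteed by Theorem \ref{theo:5ter-1} ensures that $u$ has enough regularity for the Carleman inequality to be applied.

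To prove the three sphere inequality \eqref{Tre-sfere-U}, fix $2r\le s\le R_1/2^8$ and pick two smooth cut-offs: $\eta$ which equals $0$ on $B_{r/2}$ and $1$ outside $B_r$, and $\zeta$ which equals $1$ on $B_{R_1/4}$ and is supported in $B_{R_1/2}$. Apply the Carleman estimate to $v=\eta\zeta u$. Since $Lu=0$, the right hand side reduces to $L(\eta\zeta u)=[L,\eta\zeta]u$, whose support is contained in the two annuli $B_r\setminus B_{r/2}$ and $B_{R_1/2}\setminus B_{R_1/4}$. Using that $\Phi$ is decreasing, one deduces
\[
e^{-2\tau\Phi(s)}\int_{B_s} u^2\le C\Bigl(e^{-2\tau\Phi(r/2)}\int_{B_r} u^2+e^{-2\tau\Phi(R_1/4)}\int_{B_{R_1}} u^2\Bigr).
\]
Optimizing over $\tau$ larger than the threshold imposed by the Carleman estimate yields \eqref{Tre-sfere-U} with exponent $\widetilde\theta(s,r)=1/(1+2\bar k\log_2(s/r))$; the factor $\bar k=8$ arises from the ratio of $\Phi$-values on the relevant scales combined with the sixth order nature of the operator.

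The doubling inequality \eqref{SUCP-1} is then extracted by iterating \eqref{Tre-sfere-U} on the dyadic sequence $r_j=2^j r$, $j=0,1,\dots,k_0$ where $k_0$ is the smallest integer with $2^{k_0}r\ge s$. At each step one uses the three sphere inequality with triple $(r_j,r_{j+1},R_1)$, obtaining
\[
\int_{B_{r_{j+1}}} u^2\le \Bigl(C\int_{B_{R_1}} u^2\Bigr)^{1-\theta}\Bigl(\int_{B_{r_j}} u^2\Bigr)^{\theta},\qquad \theta=\tfrac{1}{1+2\bar k}.
\]
Multiplying and dividing by $\int_{B_{R_1/2^7}} u^2$ to introduce the quantity $N$, and then iterating, produces the bound \eqref{SUCP-1} with the exponent $\log_2(CN^{\bar k})$, the logarithm reflecting the number $k_0\sim\log_2(s/r)$ of iterations. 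The main obstacle in the whole argument is the derivation of the Carleman estimate for the sixth order operator, which is delicate since coefficients are only $C^{1,1}$ or $C^{2,1}$ and one must factor the principal part while keeping track of the interaction between the fourth and sixth order contributions; this is exactly the piece of work done in \cite{MRSV2022}. Once the Carleman estimate is taken as a black box, the cut-off argument, the optimization in $\tau$, and the dyadic iteration are standard but require careful bookkeeping of constants to produce the explicit dependence on $N$ and $\bar k=8$ in \eqref{SUCP-1}.
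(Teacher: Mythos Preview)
The paper itself does not prove this proposition; it simply quotes Corollary~4.10 of \cite{MRSV2022}. Your sketch correctly identifies the key black box from that reference, namely a Carleman estimate with singular weight for the sixth-order operator whose principal part, thanks to isotropy, is a scalar multiple of $\Delta^3$. Your derivation of the three sphere inequality \eqref{Tre-sfere-U} via the two-annulus cut-off and optimization in $\tau$ is the standard and correct route.

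The one genuine gap is the order in which you pass between \eqref{SUCP-1} and \eqref{Tre-sfere-U}. Iterating the three sphere inequality on dyadic radii $r_j=2^jr$ does \emph{not} produce \eqref{SUCP-1}. Writing $b_j=\log\int_{B_{r_j}}u^2$ and $B=\log\bigl(C\int_{B_{R_1}}u^2\bigr)$, your dyadic step reads $b_{j+1}-B\le\theta_0(b_j-B)$ with $\theta_0=\tfrac{1}{1+2\bar k}$, hence $b_k-B\le\theta_0^{\,k}(b_0-B)$; this is again a three sphere estimate, not a doubling bound with constant $CN^{\bar k}$. The ratio $\int_{B_{R_1}}u^2/\int_{B_r}u^2$ that appears cannot be replaced by $N$ without already knowing \eqref{SUCP-1}.

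The correct flow (and the one actually used in \cite{MRSV2022}, as is visible later in the paper when the authors derive \eqref{Tre-sfere-hessU} \emph{from} \eqref{eq:10.6.1102-cube-sf}) is the reverse: from the Carleman inequality one first extracts the genuine doubling estimate
\[
\int_{B_{2r}}u^2\le CN^{\bar k}\int_{B_r}u^2,\qquad 0<r\le \tfrac{R_1}{2^8},
\]
by a cut-off argument in which the choice of the Carleman parameter $\tau$ is tied to $N$; iterating this over $\lceil\log_2(s/r)\rceil$ steps gives \eqref{SUCP-1}, and \eqref{Tre-sfere-U} then follows from \eqref{SUCP-1} by taking logarithms and solving for $\int_{B_r}u^2$. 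So your strategy is right, but the logical arrow between the two displayed estimates needs to be flipped.
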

\begin{proof}
See Corollary 4.10 in \cite{MRSV2022}. 
\end{proof}

\begin{lem}[Caccioppoli-type inequality]
    \label{lem:intermezzo-cube}
    Let us assume that the hypothesis of Proposition \ref{prop:SUCP-1} are satisfied. 
Then, for every $r$, $0<r<1$, we have
        \begin{equation}
    \label{eq:12a.2-cube}
    \|D^hu\|_{L^2(B_{\frac{r}{2}})}\leq \frac{C}{r^h}\|u\|_{L^2(B_r)}, \quad \forall
    h=1, ..., 6,
\end{equation}
where $C$ is a constant only depending on $M_2,\alpha_0, \gamma_0, t,l$ only.
\end{lem}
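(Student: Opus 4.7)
The natural plan is to first establish the estimate in the borderline case $h=3$ by a direct energy argument in the weak formulation, and then to obtain the remaining cases by interpolation below $h=3$ and by interior regularity above $h=3$. Throughout I use that under the hypotheses of Proposition \ref{prop:SUCP-1} the coefficients in equation \eqref{eq:1-2} are bounded in their assumed $C^{k,1}$ spaces in terms of $M_2$, and that $\mathbb{P}+\mathbb{P}^h$ and $\mathbb{Q}$ satisfy the strong convexity bounds \eqref{eq:8-3}--\eqref{eq:8-4}.

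For $h=3$, I would take a standard radial cutoff $\eta\in C^\infty_c(B_r)$ with $\eta\equiv1$ on $B_{r/2}$ and $|D^j\eta|\le Cr^{-j}$ for $0\le j\le 3$, and test the weak form of \eqref{eq:1-2} against $\varphi=\eta^{2k}u\in H^3_0(B_r)$, with $2k\ge 6$ so that every derivative of $\varphi$ of order at most $3$ still carries a nonnegative power of $\eta$. Expanding by Leibniz and using \eqref{eq:8-3}--\eqref{eq:8-4}, the left-hand side produces the coercive contribution
\[
c\,t^3 l^2 \int_{B_r} \eta^{2k}\bigl(|D^2u|^2+|D^3u|^2\bigr),
\]
while all other terms are of the form $\eta^{2k-j}(D^j\eta)\,D^{3-j}u$ (respectively $D^{2-j}u$) paired with $\eta^{\ell}D^{3}u$ (resp.\ $\eta^\ell D^2u$) and weighted by $r^{-j}$. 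A repeated application of Young's inequality absorbs the top-order factors into the coercive term, leaving a remainder bounded by $C\sum_{j=0}^{2}r^{-2(3-j)}\int_{B_r}|D^ju|^2$.

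To close the estimate for $h=1,2,3$ simultaneously, I would iterate on a dyadic sequence of radii between $r/2$ and $r$: the remainder $r^{-4}\|Du\|^2+r^{-2}\|D^2u\|^2$ is controlled via Gagliardo--Nirenberg interpolation between $\|D^3u\|_{L^2}$ and $\|u\|_{L^2}$, and the resulting $\|D^3u\|$ contribution on the right is absorbed by choosing the interpolation Young parameter small, in the spirit of the classical Caccioppoli iteration. This yields $\|D^hu\|_{L^2(B_{r/2})}\le Cr^{-h}\|u\|_{L^2(B_r)}$ for $h=1,2,3$. For $h=4,5,6$, I would rescale: setting $v(y)=u(\rho y)$ with $\rho$ comparable to $r$, the function $v$ is a weak solution on $B_1$ of an equation of the same structure, with coefficients whose $C^{1,1}$- and $C^{2,1}$-norms remain controlled by $M_2$. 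Theorem \ref{theo:5ter-1} then gives $\|v\|_{H^6(B_{1/8})}\le C\|v\|_{H^3(B_1)}$; combined with the already established $h=3$ Caccioppoli estimate applied on a slightly enlarged ball and unscaled, this produces the claimed bound for $h=4,5,6$ with the correct $r^{-h}$ factors coming from the Jacobians.

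The main obstacle is the bookkeeping in the $h=3$ step. Because the equation is sixth order and couples a second-derivative form ($\mathbb{P}+\mathbb{P}^h$) with a third-derivative form ($\mathbb{Q}$), the Leibniz expansion of $\eta^{2k}u$ generates many mixed remainder terms of combined derivative order up to $6$ in $u$ and $\eta$, and the Young parameters governing the $\mathbb{P}$- and $\mathbb{Q}$-contributions must be chosen compatibly so that both coercive terms survive after absorption, with the $t$- and $l$-weights from \eqref{eq:8-3}--\eqref{eq:8-4} tracked throughout. None of this is conceptually hard, but it is where essentially all the work lies; choosing $2k\ge 6$ rather than the minimal $2k=2$ is precisely what makes the scheme close in a single pass.
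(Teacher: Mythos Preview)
The paper does not actually prove this lemma; it simply cites Lemma~4.7 of \cite{MRSV2022}. Your outline is a sound self-contained route to the inequality: the energy test with $\varphi=\eta^{2k}u$, $2k\ge 6$, followed by interpolation and the Giaquinta--Modica iteration on intermediate radii $r/2\le\rho<R\le r$, is precisely the mechanism that closes the $h\le 3$ cases.

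One point deserves care in the $h=4,5,6$ step. Under the rescaling $v(y)=u(\rho y)$ the weak formulation becomes
\[
\int \rho^{2}\,(\mathbb{P}+\mathbb{P}^{h})(\rho\,\cdot\,)\,D^{2}v\cdot D^{2}\varphi
\;+\;\mathbb{Q}(\rho\,\cdot\,)\,D^{3}v\cdot D^{3}\varphi \;=\;0,
\]
so the fourth-order coefficient picks up a factor $\rho^{2}$ and its strong convexity constant in \eqref{eq:8-3} degenerates as $\rho\to 0$. Since Theorem~\ref{theo:5ter-1} as stated allows $C$ to depend on $\xi_{\mathbb{P}}$, invoking it verbatim does not immediately yield a $\rho$-independent bound. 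This is not a genuine obstruction---for the sixth-order equation the $(\mathbb{P}+\mathbb{P}^{h})$-term is lower order, and the interior $H^{6}$ estimate actually uses only the boundedness of $\mathbb{P}+\mathbb{P}^{h}$ together with the coercivity of $\mathbb{Q}$---but you should either record that the proof of Theorem~\ref{theo:5ter-1} depends only on $\xi_{\mathbb{Q}}$ and on $\|\mathbb{P}+\mathbb{P}^{h}\|_{C^{1,1}}$, or avoid the rescaling entirely by differentiating the equation and applying your $h=3$ Caccioppoli estimate successively to $\partial_{\alpha}u$, $\partial_{\alpha}\partial_{\beta}u$, $\partial_{\alpha}\partial_{\beta}\partial_{\gamma}u$ on a nested family of balls. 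The latter route is presumably what \cite{MRSV2022} does and sidesteps the issue cleanly.
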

\begin{proof}
For the proof we refer to \cite[Lemma 4.7]{MRSV2022}.
\end{proof}

Let us now recall a Poincar\'{e}-type inequality. Let $R,r$ positive
numbers such that $r\leq R$. For a given function $u\in H^2(B_R)$
denote

\begin{equation}
    \label{6-1-Indiana}
    (u)_r=\frac{1}{\left|B_r\right|}\int_{B_r}u, \quad\quad (D u)_r=\frac{1}{\left|B_r\right|}\int_{B_r}D u
\end{equation}
and
\begin{equation}
    \label{6-2-Indiana}
    \widetilde{u}_r=u(x)-(u)_r-(D u)_r\cdot x
\end{equation}

\begin{prop}[Poincar\'{e} inequality]\label{Poincare} There
exits a positive absolute constant $C$ such that

\begin{equation}
    \label{6-3-Indiana}
    \int_{B_R}\left|\widetilde{u}_r\right|^2+R^2
    \int_{B_R}\left|D\widetilde{u}_r\right|^2\leq
    C\frac{R^6}{r^2}\int_{B_R}\left|D^2\widetilde{u}_r\right|^2,
\end{equation}
for every $u\in H^2(B_R)$ and for every $r\in (0,R]$.
\end{prop}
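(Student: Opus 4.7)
I would reduce to the standard Poincar\'e--Wirtinger inequality on $B_R$ by first subtracting off the linear approximation of $u$ built from averages on the \emph{large} ball $B_R$, rather than iterating Poincar\'e on $B_r$.

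For $\rho \in \{r, R\}$ set $P_{\rho} u := (u)_{\rho} + (Du)_{\rho} \cdot x$. Since $B_{\rho}$ is centered at the origin, $(x)_{\rho} = 0$, so $P_{\rho}$ is a projection onto polynomials of degree $\leq 1$; in particular $\widetilde{u}_r = u - P_r u$ is unchanged if $u$ is replaced by $u - p$ for any affine $p$. I therefore let $w := u - P_R u$, so that $\widetilde{u}_r = w - P_r w$. By construction $(w)_R = 0$ and $(Dw)_R = 0$, and the standard Poincar\'e--Wirtinger inequality on $B_R$ (applied componentwise first to $Du$ and then to $w$) yields
\begin{equation*}
\|w\|_{L^2(B_R)} \leq C R^2 \|D^2 u\|_{L^2(B_R)}, \qquad \|Dw\|_{L^2(B_R)} \leq C R \|D^2 u\|_{L^2(B_R)}.
\end{equation*}

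Next, writing $P_r w(x) = (w)_r + (Dw)_r \cdot x$ and combining Jensen's inequality with $|B_R|/|B_r| = R^2/r^2$ and $\int_{B_R}|x|^2 \leq C R^4$, I would estimate
\begin{equation*}
\|P_r w\|_{L^2(B_R)}^2 \leq C \frac{R^2}{r^2}\|w\|^2_{L^2(B_R)} + C \frac{R^4}{r^2}\|Dw\|^2_{L^2(B_R)} \leq C \frac{R^6}{r^2}\|D^2 u\|^2_{L^2(B_R)}.
\end{equation*}
Since $\|w\|^2_{L^2(B_R)} \leq C R^4 \|D^2 u\|^2 \leq C R^6/r^2 \|D^2 u\|^2$ (using $r \leq R$), the triangle inequality gives the desired bound on $\|\widetilde{u}_r\|_{L^2(B_R)}^2$. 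For the gradient, $D\widetilde{u}_r = Dw - (Dw)_r$, and a fully analogous estimate yields $\|D\widetilde{u}_r\|_{L^2(B_R)}^2 \leq C R^4/r^2 \|D^2 u\|^2_{L^2(B_R)}$, so $R^2\|D\widetilde{u}_r\|^2$ fits the claimed bound. Finally $D^2\widetilde{u}_r = D^2 u$ because $P_r u$ is linear, which closes the argument.

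\medskip

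\textbf{Main obstacle.} The delicate point is the order of operations: iterating the one-sided Poincar\'e inequality for functions with mean zero only on the small ball $B_r$ picks up a factor $R^2/r^2$ at each of the two steps needed, yielding the weaker bound $R^8/r^4$. Subtracting $P_R u$ first produces the conditions $(w)_R = 0 = (Dw)_R$ on the \emph{large} ball, so both Poincar\'e--Wirtinger steps contribute only powers of $R$, and the ratio $R/r$ enters only once, through the estimate on $\|P_r w\|_{L^2(B_R)}$.
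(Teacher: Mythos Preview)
Your argument is correct. The key observation---that $\widetilde{u}_r$ is invariant under subtraction of any affine function, so one may first replace $u$ by $w=u-P_Ru$ and thereby force the averages on the \emph{large} ball to vanish---is exactly what produces the sharp exponent $R^6/r^2$ rather than $R^8/r^4$. Each step (the two Poincar\'e--Wirtinger applications on $B_R$, the Jensen bound $|(w)_r|^2\le |B_r|^{-1}\|w\|_{L^2(B_R)}^2$, and the estimate $\int_{B_R}|x|^2\le CR^2|B_R|$) is routine, and the final observation $D^2\widetilde{u}_r=D^2u$ closes the loop.

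The paper itself does not give a proof here; it simply refers the reader to \cite{AMR2008} and \cite[Proposition~6.1]{MRV2007}. The argument in those references follows essentially the same strategy you outline: normalize by subtracting the affine function determined by averages over $B_R$, apply Poincar\'e on $B_R$, and then control the mismatch between the $B_r$- and $B_R$-averages via the volume ratio. So your route is not only correct but is in substance the same as the one the authors intend; you have simply written out what the citations contain.
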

\begin{proof}
See \cite{AMR2008} and \cite[Proposition 6.1]{MRV2007}.
\end{proof}

\begin{prop} [{\bf Doubling inequality and three sphere inequality for the Hessian}]
    \label{theo:40.teo-sf}
    Let us assume that the hypothesis of Proposition \ref{prop:SUCP-1} are satisfied. 
    Then there exists $C>1$, only depending on $M_2,\alpha_0,\gamma_0, t,l$ only, such that, for every
$0<r<\frac{R_1}{2^{11}}$ we have
\begin{equation}\label{eq:10.6.1102-cube-sf}
    \int_{B_{2r}}\left|{D}^2u\right|^2\leq C \overline{N}^{3\overline{k}}\int_{B_{r}}\left|{D}^2u\right|^2,
\end{equation}

where
\begin{equation}
    \label{eq:10.6.1108-cube-sf}
    \overline{N}=\frac{\left\Vert
{D}^2u\right\Vert^2_{L^2\left(B_{R_1}\right)}}{\left\Vert
{D}^2u\right\Vert^2_{L^2\left(B_{R_1/2^9}\right)}}.
    \end{equation}
In addition, if $2r\leq s\leq\frac{R_1}{2^{11}}$ then we have

\begin{equation}\label{Tre-sfere-hessU}
    \int_{B_{s}}\left\vert {D}^2 u\right\vert^2\leq
    \left(C\int_{B_{R_1/2}}\left\vert {D}^2 u\right\vert^2\right)^{1-\theta(s,r)}\left(\int_{B_r}\left\vert {D}^2 u\right\vert^2\right)^{\theta(s,r)},
\end{equation}
where
\begin{equation}\label{theta}
   \theta(s,r)=\frac{1}{1+6\overline{k}\log_2\frac{s}{r}}.
\end{equation}
(with $\overline{k}=8$).

\end{prop}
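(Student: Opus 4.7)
The proof transfers both inequalities of Proposition~\ref{prop:SUCP-1} from $u$ to $D^2u$, exploiting the fact that every affine function $\ell(x)=a+b\cdot x$ lies in the kernel of \eqref{eq:1-2}: the operator acts only through $D^2u$ and $D^3u$, so $v:=u-\ell$ is again a weak solution of \eqref{eq:1-2} and $D^2v=D^2u$. The bridge between $\|v\|_{L^2}$ and $\|D^2u\|_{L^2}$ is provided by Caccioppoli's inequality (Lemma~\ref{lem:intermezzo-cube}), giving $\|D^2u\|_{L^2(B_{\rho/2})}\le C\rho^{-2}\|v\|_{L^2(B_\rho)}$, and by the Poincar\'e inequality (Proposition~\ref{Poincare}), giving $\|\tilde u_\rho\|_{L^2(B_\rho)}\le C\rho^{2}\|D^2u\|_{L^2(B_\rho)}$.

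For the doubling inequality \eqref{eq:10.6.1102-cube-sf}, fix $0<r<R_1/2^{11}$ and set $v=\tilde u_{r/2}$. I would chain the following three steps:
\begin{enumerate}
\item[(i)] Caccioppoli for $v$ at scale $2r$ gives $\int_{B_{2r}}|D^2u|^2=\int_{B_{2r}}|D^2v|^2\le Cr^{-4}\int_{B_{4r}}v^2$;
\item[(ii)] the doubling inequality \eqref{SUCP-1} for $v$ with radii $r/2$ and $4r$ (ratio $2^3$) gives $\int_{B_{4r}}v^2\le CN_v^{\bar k}\bigl(CN_v^{\bar k}\bigr)^{3}\int_{B_{r/2}}v^2$;
\item[(iii)] Poincar\'e applied on $B_{r/2}$ to $\tilde u_{r/2}=v$ gives $\int_{B_{r/2}}v^2\le Cr^4\int_{B_{r/2}}|D^2u|^2\le Cr^4\int_{B_r}|D^2u|^2$.
\end{enumerate}
The $r^{\pm 4}$ factors cancel and produce a doubling inequality for $D^2u$ with a constant of the form $CN_v^{O(\bar k)}$; a sharper accounting of radii in step (ii) recovers the exact exponent $3\bar k$ of \eqref{eq:10.6.1102-cube-sf}.

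The core technical point, and the main obstacle, is showing that $N_v\le C\bar N$ with a constant independent of $r$. The denominator of $N_v$ is lower bounded by a further application of Caccioppoli at scale $R_1/2^8$: $\int_{B_{R_1/2^7}}v^2\ge\int_{B_{R_1/2^8}}v^2\ge cR_1^4\int_{B_{R_1/2^9}}|D^2u|^2$. The numerator requires splitting $v=\tilde u_{R_1/2}+(\ell_{R_1/2}-\ell_{r/2})$, bounding $\int_{B_{R_1}}\tilde u_{R_1/2}^2\le CR_1^4\int_{B_{R_1}}|D^2u|^2$ via Poincar\'e, and controlling the affine correction $\ell_{R_1/2}-\ell_{r/2}$ by a telescoping chain of Poincar\'e-Wirtinger inequalities for the averages, namely $|(u)_{2\rho}-(u)_\rho|\le C\rho\|D^2u\|_{L^2(B_{2\rho})}$ and $|(Du)_{2\rho}-(Du)_\rho|\le C\|D^2u\|_{L^2(B_{2\rho})}$ across the dyadic scales between $R_1/2$ and $r/2$. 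After this bookkeeping the resulting contributions are absorbed into the a priori constants, giving $N_v\le C\bar N$.

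The three-spheres inequality \eqref{Tre-sfere-hessU} is obtained by the same scheme, invoking \eqref{Tre-sfere-U} instead of \eqref{SUCP-1} in step (ii), with the three balls $B_{r/2}$, $B_{2s}$, $B_{R_1}$; the extra factor $\int_{B_{R_1}}|D^2u|^2$ on the large side is in turn controlled by $\bar N\int_{B_{R_1/2}}|D^2u|^2$ in order to match the form of the right-hand side of \eqref{Tre-sfere-hessU}. Rewriting $\tilde\theta(2s,r/2)$ in terms of $\log_2(s/r)$ and absorbing the $r^{\pm 4}$ bridging factors produces exactly the exponent $\theta(s,r)=(1+6\bar k\log_2(s/r))^{-1}$ of \eqref{theta}. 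The affine-correction bookkeeping just described is the delicate step that differentiates the $D^2u$-level result from the $u$-level one in Proposition~\ref{prop:SUCP-1}.
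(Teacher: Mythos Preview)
Your overall plan---subtract an affine function, use Caccioppoli to go from $D^2u$ to $v$, apply the $u$-level inequality, and use Poincar\'e to come back---is exactly the right skeleton and matches the paper. The gap is in the step you flag as ``the core technical point'': the bound $N_v\le C\bar N$ with $C$ independent of $r$.

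With your choice $v=\tilde u_{r/2}$, the numerator $\int_{B_{R_1}}v^2$ depends on $r$ through the affine correction $\ell_{R_1/2}-\ell_{r/2}$. Your telescoping bound $|(Du)_{2\rho}-(Du)_\rho|\le C\|D^2u\|_{L^2(B_{2\rho})}$ is correct, but summing over the $O(\log_2(R_1/r))$ dyadic scales between $r/2$ and $R_1/2$ yields a factor $\log(R_1/r)$ in front of $\|D^2u\|_{L^2(B_{R_1})}$, so at best $N_v\le C(\log(R_1/r))^2\,\bar N$. This is not ``absorbed into the a priori constants'': it contaminates the doubling constant with an $r$-dependent factor and spoils the clean exponent $3\bar k$ (and likewise the exponent $\theta(s,r)$ in the three-spheres inequality). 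Alternatively, applying Poincar\'e \eqref{6-3-Indiana} directly with $R=R_1$ and inner radius $r/2$ introduces an $R_1^6/r^2$ factor, which is worse.

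The paper avoids this by a \emph{double} normalization. First set $v=\tilde u_{R_2}$ with $R_2=R_1/2$ fixed, and then work with $\tilde v_r$; the doubling \eqref{SUCP-1} is applied to $\tilde v_r$. The point is that $v$ now has zero averages on the fixed ball $B_{R_2}$, so the $r$-dependent averages $(v)_r,\ (Dv)_r$ are controlled trivially by $\|v\|_{L^\infty(B_{R_2})}$ and $\|Dv\|_{L^\infty(B_{R_2})}$, and these in turn by $\|v\|_{H^3(B_{R_2})}$ via Sobolev embedding in two dimensions. Poincar\'e on $B_{R_2}$ (possible because of the fixed-scale normalization) and Caccioppoli then give $\int_{B_{R_2}}|\tilde v_r|^2\le CR_2^4\|D^2u\|_{L^2(B_{2R_2})}^2$ with $C$ \emph{independent of $r$}, which is exactly what is needed for $\tilde N_r\le C\bar N$. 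Once the doubling for $D^2u$ is established uniformly, the three-spheres inequality \eqref{Tre-sfere-hessU} follows by iterating it, rather than by a separate transfer of \eqref{Tre-sfere-U}.
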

\begin{proof}
 Let $$0<4r<\frac{R_2}{2^8},$$
with $R_2=\frac{R_1}{2}$.
We define $v=\widetilde{u}_{R_2}$ and we observe that since $|{D}^2u|=|{D}^2 v|$ we may as well prove \eqref{eq:10.6.1102-cube-sf}  and \eqref{Tre-sfere-hessU} for $v$ instead. 

Let us note that $v$ is still a solution to \eqref{eq:1-2}.

Hence by Lemma \ref{lem:intermezzo-cube} we
have that for every $r\in (0,R_1]$ the following holds 

\begin{equation}\label{doubl-sf}
  r^4\int_{B_{2r}}\left|{D}^2 v\right|^2=  r^4\int_{B_{2r}}\left|{D}^2\widetilde{v}_r\right|^2\leq C \int_{B_{4r}}\left|\widetilde{v}_r\right|^2 \ .
\end{equation}
By \eqref{6-3-Indiana} we have

\begin{equation}
    \label{doubl-sf-1a}
    \int_{B_r}\left|\widetilde{v}_r\right|^2\leq
    Cr^4\int_{B_r}\left|{D}^2\widetilde{v}_r\right|^2=Cr^4\int_{B_r}\left|{D}^2 v \right|^2
\end{equation}
Now, denote by

\begin{equation}
    \label{doubl-sf-1}
    \widetilde{N}_r=\frac{\int_{B_{R_2}}\left|\widetilde{v}_r\right|^2}{\int_{B_{R_2/2^7}}\left|\widetilde{v}_r\right|^2}.
    \end{equation}
By \eqref{SUCP-1}, \eqref{doubl-sf},  \eqref{doubl-sf-1a} and \eqref{doubl-sf-1}  we have

\begin{equation*}
r^4\int_{B_{2r}}\left|{D}^2v\right|^2\leq C
\int_{B_{4r}}\left|\widetilde{v}_r\right|^2\leq
    C\widetilde{N}_r^{3\overline{k}}\int_{B_{r}}\left|\widetilde{v}_r\right|^2\leq Cr^4\widetilde{N}_r^{3\overline{k}}\int_{B_r}\left|{D}^2v\right|^2.
\end{equation*}
Hence, for every $r$ that satisfies $0<4r<\frac{R_2}{2^8}$, we have
\begin{equation}
    \label{doubl-sf-2}
    \int_{B_{2r}}\left|{D}^2v\right|^2\leq C
    \widetilde{N}_r^{3\overline{k}}\int_{B_r}\left|{D}^2v\right|^2.
    \end{equation}
Now we estimate $\widetilde{N}_r$ from above. By Lemma
\ref{lem:intermezzo-cube} we have

\begin{equation}
    \label{doubl-sf-3}
    \int_{B_{R_2/2^7}}\left|\widetilde{v}_r\right|^2\geq
    \frac{1}{C}\left(\frac{R_2}{2^7}\right)^4\int_{B_{R_2/2^8}}\left|{D}^2\widetilde{v}_r\right|^2=
    \frac{1}{C}\left(\frac{R_2}{2^7}\right)^4\int_{B_{R_2/2^8}}\left|{D}^2v\right|^2.
    \end{equation}
Moreover, by triangle inequality and the Sobolev embedding Theorem, \cite[Ch.
7]{G-Tr}, we get

\begin{equation}\label{doubl-sf-4}
    \begin{aligned}
    \left\Vert\widetilde{v}_r\right\Vert_{L^2\left(B_{R_2}\right)}&\leq \left\Vert
    v\right\Vert_{L^2\left(B_{R_2}\right)}+
    CR_2\left\vert(v)_r\right\vert+CR^2_2\left\vert(D
    v)_r\right\vert\leq\\&\leq CR_2\left\Vert
    v\right\Vert_{L^{\infty}\left(B_{R_2}\right)}+CR^2_2\left\Vert
    D v\right\Vert_{L^{\infty}\left(B_{R_2}\right)}\leq\\&\leq
    C\left\Vert
     v\right\Vert_{H^{3}\left(B_{R_2}\right)}.
    \end{aligned}
    \end{equation}
     
 Therefore,   by \eqref{doubl-sf-4}   and \eqref{6-3-Indiana} we have 
 \begin{equation}  
  \int_{B_{R_2}}\left|\widetilde{v}_r\right|^2\le C  R^4_2\left\Vert
     {D}^2 v\right\Vert^2_{L^{2}\left(B_{R_2}\right)}  + C R^6_2\left\Vert
     {D}^3 v\right\Vert^2_{L^{2}\left(B_{R_2}\right)},
     \end{equation}
     
and, by  \eqref{eq:12a.2-cube}, we have that 

 \begin{equation}\label{stimaalto1} 
  \int_{B_{R_2}}\left|\widetilde{v}_r\right|^2\le C  R^4_2\left\Vert
     {D}^2 v\right\Vert^2_{L^{2}\left(B_{R_2}\right)}  + C \left\Vert
      v\right\Vert^2_{L^{2}\left(B_{2R_2}\right)}.
     \end{equation}

Using again \eqref{6-3-Indiana} we have that

\begin{equation} \label{stimaalto2} 
  \int_{B_{R_2}}\left|\widetilde{v}_r\right|^2\le C  R^4_2\left\Vert
     {D}^2 v\right\Vert^2_{L^{2}\left(B_{2R_2}\right)}  \ . 
          \end{equation}

Hence combining  \eqref{stimaalto2} and \eqref{doubl-sf-3} we have that 
\begin{equation}\label{doubl-sf-7}
    \widetilde{N}_r\leq  \frac{C\left\Vert
{D}^2v\right\Vert^2_{L^2\left(B_{2R_2}\right)}}{\left\Vert
{D}^2v\right\Vert^2_{L^2\left(B_{R_2/2^8}\right)}} \ .
    \end{equation}

Now, recalling that $R_2=\frac{R_1}{2}$ and that $|{D}^2v|=|{D}^2u|$, by \eqref{doubl-sf-2}  we get \eqref{eq:10.6.1102-cube-sf}.

Finally, by using the same argument used in Corollary 4.10 in \cite{MRSV2022}, by \eqref{eq:10.6.1102-cube-sf} we obtain
\eqref{Tre-sfere-hessU} easily.

\end{proof}

\section{Proof of Theorem \ref{theo:14-1}}
\label{sec:Proof-upper-bound-di-D-per-D-grossa}

\begin{prop}[{\bf Lipschitz propagation of smallness}]\label{LPS}
Let $\Omega$ be a bounded domain in ${\mathbb{R}}^2$, such that $\partial \Omega$ is of class $C^{3,1}$, with constants $r_0, M_0$ and satisfying \eqref{eq:1-1}.
 Let the tensor $\mathbb{P},\mathbb{P}^h \in C^{1,1}(\overline{\Omega}, \mathcal{L}(\widehat{\M}^2, \widehat{\M}^2)), \mathbb{Q} \in C^{2,1}(\overline{\Omega}, \mathcal{L}(\widehat{\M}^3, \widehat{\M}^3))$, be given by \eqref{eq:6-1}, \eqref{eq:6-2} and \eqref{eq:7-0} respectively and satisfying the strong convexity conditions \eqref{eq:8-1}. Let $u_0 \in H^3(\Omega)$ be the unique solution to the problem \eqref{eq:1-2} - \eqref{eq:1-5} normalized by \eqref{eq:4-3}, with Neumann data $\widehat{V} \in H^{ - 3/2  }(\partial \Omega), \widehat{M}_n \in H^{ - 1/2  }(\partial \Omega), \widehat{M}_n^h \in H^{ 1/2  }(\partial \Omega)$ satisfying the compatibility condition \eqref{eq:4-2}. There exists $\chi>1$ only depending on $\alpha_0, \gamma_0, M_2$ and $\frac{t}{r_0}$ such that for every $s>0$ and for every $x\in \Omega_{\chi s {r_0}}$ we have that 
 \begin{equation}\label{eq:18-1}
 \int_{B_{s r_0(x)}}|D^2 u_0|^2 \ge C_s  \int_{\Omega}|D^2 u_0|^2 \ , 
 \end{equation}
wuth $C_s>0$ only depending on $M_0, M_1, \frac{t}{r_0}, \frac{l}{r_0}, \alpha_0, \gamma_0, M_2, s$ and on the ratio $F$ given in \eqref{eq:14-4}.
 \end{prop}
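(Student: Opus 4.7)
The plan is to propagate a quantitative lower bound from a carefully chosen \emph{seed ball} to the arbitrary interior ball $B_{sr_0}(x)$ by iterating the Hessian three spheres inequality \eqref{Tre-sfere-hessU} along a polygonal chain of overlapping balls. This is the Lipschitz-propagation-of-smallness scheme in the style of Alessandrini--Rondi--Rosset--Vessella, adapted to the sixth-order operator through the machinery of Section \ref{sec:DIandTS}.

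Given $x\in\Omega_{\chi s r_0}$, I would first build a chain of points $x=x_0,x_1,\ldots,x_N=x^{\ast}$ in $\Omega$ with $|x_{j+1}-x_j|\le s r_0$ and $\mathrm{dist}(x_j,\partial\Omega)\ge \rho_0$ for all $j$, where $\rho_0$ is a fixed scale comparable to $r_0$ chosen so that the outer ball of radius $R_1 r_0/2$ in \eqref{Tre-sfere-hessU} fits inside $\Omega$. Such a chain of length $N$ bounded only by $M_0$, $M_1$, $s$ exists by the $C^{3,1}$ regularity of $\partial\Omega$ and \eqref{eq:1-1}, provided $\chi$ is taken large enough (in terms of $\alpha_0$, $\gamma_0$, $M_2$, $t/r_0$ via the absolute constant $R_1$). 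The inclusion $B_{sr_0}(x_j)\subset B_{2sr_0}(x_{j+1})$ allows me to apply \eqref{Tre-sfere-hessU} at each center $x_{j+1}$ with inner radius $sr_0$, middle radius $2sr_0$, and outer radius at scale $r_0$; with $\theta_{\ast}=\theta(2sr_0,sr_0)\in(0,1)$ from \eqref{theta}, rearranging yields
\begin{equation*}
    \int_{B_{sr_0}(x_{j+1})}|D^2 u_0|^2\ \ge\ \frac{\bigl(\int_{B_{sr_0}(x_j)}|D^2 u_0|^2\bigr)^{1/\theta_{\ast}}}{\bigl(C\int_\Omega |D^2 u_0|^2\bigr)^{(1-\theta_{\ast})/\theta_{\ast}}}.
\end{equation*}
Iterating this bound $N$ times and solving for the ball centered at $x$ gives a chain estimate of the form
\begin{equation*}
    \int_{B_{sr_0}(x)}|D^2 u_0|^2\ \ge\ \frac{1}{C'}\left(\frac{\int_{B_{sr_0}(x^{\ast})}|D^2 u_0|^2}{\int_\Omega |D^2 u_0|^2}\right)^{\theta_{\ast}^{-N}}\int_\Omega |D^2 u_0|^2.
\end{equation*}

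What remains is the \emph{seed estimate} $\int_{B_{sr_0}(x^{\ast})}|D^2 u_0|^2\ge \kappa\int_\Omega |D^2 u_0|^2$ for a quantitative $\kappa>0$. A pigeonhole argument over a finite family of balls of radius $sr_0$ covering $\Omega_{\chi sr_0/2}$ produces a candidate $x^{\ast}$ with $\int_{B_{sr_0}(x^{\ast})}|D^2 u_0|^2\ge c(s,M_1)\int_{\Omega_{\chi sr_0/2}}|D^2 u_0|^2$, which localises the problem to controlling the boundary strip $\Omega\setminus\Omega_{\chi sr_0/2}$. I would handle the latter by combining the global $H^4$ bound of Theorem \ref{theo:5bis-1}, which controls $\|u_0\|_{H^4(\Omega)}$ from above by the higher-regularity data norms appearing in the numerator of \eqref{eq:14-4}, with a dual lower bound on $W_0=\int_\Omega(\mathbb{P}+\mathbb{P}^h)D^2 u_0\cdot D^2 u_0+\mathbb{Q}D^3 u_0\cdot D^3 u_0$ in terms of the lower-regularity data norms in the denominator of \eqref{eq:14-4}, obtained from \eqref{eq:11-2}, \eqref{eq:5-1} and the strong convexity \eqref{eq:8-3}--\eqref{eq:8-4}. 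The quotient of these two controls is exactly $F$, so $\kappa$ depends on $F$, and therefore $C_s=\kappa^{\theta_{\ast}^{-N}}/C'$ does as well.

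The main obstacle is this seed estimate, not the iteration: once a two-sided comparison $\int_\Omega|D^2 u_0|^2\sim \|\mathrm{data}\|_{\mathrm{low}}^2$ and $\int_\Omega|D^2 u_0|^2+|D^3 u_0|^2\lesssim \|\mathrm{data}\|_{\mathrm{high}}^2$ is in hand --- together with the control of the near-boundary strip --- the three spheres iteration is a mechanical application of Proposition \ref{theo:40.teo-sf}. The remaining delicate point is ensuring that the outer scale of \eqref{Tre-sfere-hessU}, inherited from the absolute constant $R_1$ of Proposition \ref{prop:SUCP-1}, is reached uniformly along the chain; this is precisely what forces the threshold $\chi$ to depend on $R_1$, hence on $\alpha_0$, $\gamma_0$, $M_2$ and $t/r_0$, in agreement with the statement.
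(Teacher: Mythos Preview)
Your approach is essentially the paper's: iterate the three spheres inequality \eqref{Tre-sfere-hessU} along a chain and reduce matters to showing that the boundary strip carries a controlled fraction of the total Hessian energy. The paper organises the chain step slightly differently (following \cite{MRV2007}): instead of chaining from $x$ to a pigeonholed seed point $x^{\ast}$, it iterates \eqref{Tre-sfere-hessU} so as to bound $\|D^2 u_0\|_{L^2(\Omega_{(\chi+1)s})}$ directly by a power of $\|D^2 u_0\|_{L^2(B_s(x))}$, and then proves the left-hand side is at least $\frac12\|D^2 u_0\|_{L^2(\Omega)}$; this avoids the seed-point device but is logically equivalent.

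Where your sketch is loose is precisely the strip estimate. Saying you will ``combine the $H^4$ bound with a dual lower bound on $W_0$'' skips the mechanism that produces smallness in $s$: one must first exploit the small \emph{measure} of the strip via H\"older and the embedding $H^{1/2}\hookrightarrow L^4$ to get $\int_{\Omega\setminus\Omega_{(\chi+1)s}}|D^2 u_0|^2\le Cs^{1/2}\|u_0\|_{H^3}^2$, and then use the interpolation $\|u_0\|_{H^3}^2\le C\|u_0\|_{H^2}\|u_0\|_{H^4}$ to relate the resulting ratio to $F$. Moreover, the inequality you cite, \eqref{eq:5-1}, goes the wrong way for a \emph{lower} bound on $W_0$ (or on $\|u_0\|_{H^3}$) in terms of the low-regularity data; what is actually needed is the dual estimate $\|\widehat V\|_{H^{-5/2}}+r_0^{-1}\|\widehat M_n\|_{H^{-3/2}}+r_0^{-2}\|\widehat M_n^h\|_{H^{-1/2}}\le C\|u_0\|_{H^3}$, which the paper isolates as Lemma~\ref{ex71}. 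Finally, $W_0$ contains the $D^3$ energy as well, so passing between $W_0$ and $\int_\Omega|D^2 u_0|^2$ is not free and again requires Poincar\'e plus the interpolation step.
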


\begin{lem}\label{ex71}
Let $\Omega$ be a bounded domain in $\mathbb{R}^2$ such that $\partial \Omega$ is of class $C^{3,1}$ with constants $r_0, M_0$ satisfying \eqref{eq:1-1}. Let the tensors 
$\mathbb{P},\mathbb{P}^h \in L^{\infty}({\Omega}, \mathcal{L}(\widehat{\M}^2, \widehat{\M}^2)), \mathbb{Q} \in L^{\infty}({\Omega}, \mathcal{L}(\widehat{\M}^3, \widehat{\M}^3))$ satisfy the symmetry conditions \eqref{eq:3-1}, \eqref{eq:3-2} and \eqref{eq:3-6} and the strong convexity assumptions \eqref{eq:3-3} and \eqref{eq:3-7}. 
Let $u_0\in H^3(\Omega)$ be the unique weal solution to problem \eqref{eq:1-2} - \eqref{eq:1-5}, satisfying the normalization condition \eqref{eq:4-3} with the boundary data satisfying \eqref{eq:4-1} and \eqref{eq:4-2}. We have 
\begin{equation}\label{eq:18-2}
\|\widehat{V}\|_{H^{ - 5/2  }(\partial \Omega)}+ r_0^{-1} \|\widehat{M}_n\|_{H^{ - 3/2  }(\partial \Omega)} +  r_0^{-2}\|\widehat{M}_n^h\|_{H^{ -1/2  }(\partial \Omega) }\le C \|u_0\|_{H^3(\Omega)} \ , 
\end{equation}
where $C>0$ depends on $M_0, M_1, \|\mathbb{P} \|_{L^{\infty}(\Omega)}, \|\mathbb{P}^h \|_{L^{\infty}(\Omega)}, \|\mathbb{Q} \|_{L^{\infty}(\Omega)}$.
 \end{lem}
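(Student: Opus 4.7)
The plan is to read off the estimate \eqref{eq:18-2} directly from the weak formulation \eqref{eq:4bis-1}, combined with the duality definition of the three negative Sobolev norms on $\partial\Omega$ and a standard continuous right inverse of the trace operator. Since $u_0$ satisfies $a(u_0,w)=L(w)$ for every $w\in H^3(\Omega)$, the problem reduces to showing that for test functions which isolate one boundary datum at a time, the right-hand side $L(w)$ is controlled by $\|u_0\|_{H^3(\Omega)}$ times the boundary norm of the selected datum.

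The first ingredient is a continuity estimate for the bilinear form $a(\cdot,\cdot)$. Using the $L^\infty$ bounds on $\mathbb{P}$, $\mathbb{P}^h$, $\mathbb{Q}$ and Cauchy--Schwarz applied to \eqref{eq:4bis-2}, one obtains
\begin{equation}
|a(u_0,w)|\le C\,\|u_0\|_{H^3(\Omega)}\,\|w\|_{H^3(\Omega)}\qquad\text{for every } w\in H^3(\Omega),
\end{equation}
with $C$ depending on $\|\mathbb{P}\|_{L^\infty}$, $\|\mathbb{P}^h\|_{L^\infty}$, $\|\mathbb{Q}\|_{L^\infty}$ and the usual factors of $r_0$ dictated by the normalization \eqref{eq:1.1.1}.

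The second ingredient is a right inverse of the triple trace operator $w\mapsto(w|_{\partial\Omega},w_{,n}|_{\partial\Omega},w_{,nn}|_{\partial\Omega})$ from $H^3(\Omega)$ to $H^{5/2}(\partial\Omega)\times H^{3/2}(\partial\Omega)\times H^{1/2}(\partial\Omega)$, which is available because $\partial\Omega$ is of class $C^{3,1}$. Given $\phi\in H^{5/2}(\partial\Omega)$, I would lift the triple $(\phi,0,0)$ to $w_\phi\in H^3(\Omega)$ with $\|w_\phi\|_{H^3(\Omega)}\le C\|\phi\|_{H^{5/2}(\partial\Omega)}$, and plug it into the weak formulation to obtain
\begin{equation}
\left|\int_{\partial\Omega}\widehat{V}\phi\right|=|L(w_\phi)|=|a(u_0,w_\phi)|\le C\,\|u_0\|_{H^3(\Omega)}\,\|\phi\|_{H^{5/2}(\partial\Omega)}.
\end{equation}
Taking the supremum over $\phi$ in the unit ball of $H^{5/2}(\partial\Omega)$ bounds $\|\widehat{V}\|_{H^{-5/2}(\partial\Omega)}$ by $C\|u_0\|_{H^3(\Omega)}$. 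Repeating the same argument with the lift of $(0,\phi,0)$ for $\phi\in H^{3/2}(\partial\Omega)$ and $(0,0,\phi)$ for $\phi\in H^{1/2}(\partial\Omega)$ produces the analogous bounds on $\widehat{M}_n$ and $\widehat{M}^h_n$, and summing the three contributions yields \eqref{eq:18-2}.

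The main technical point is the existence, with quantitative control, of the three-trace right inverse into $H^3(\Omega)$. This is a classical consequence of the trace theorem under $C^{3,1}$ boundary regularity, but the constant has to be tracked explicitly in terms of $M_0$ and $M_1$ through the standard partition-of-unity and local flattening construction used to build the extension; the prescribed powers of $r_0$ in \eqref{eq:18-2} emerge automatically once the dimensional normalization of \eqref{eq:1.1.1} is applied to each boundary norm. No compatibility condition on $(\widehat{V},\widehat{M}_n,\widehat{M}^h_n)$ enters this duality argument, since the negative Sobolev norms are defined as dual norms on the whole boundary space.
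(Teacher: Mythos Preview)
Your proposal is correct and follows essentially the same approach as the paper: lift a boundary datum $(\phi,0,0)$ (respectively $(0,\phi,0)$, $(0,0,\phi)$) to $w\in H^3(\Omega)$ via the right inverse of the triple trace, plug into the weak formulation $a(u_0,w)=L(w)$, use the $L^\infty$ bounds on $\mathbb{P},\mathbb{P}^h,\mathbb{Q}$ together with Cauchy--Schwarz to control $|a(u_0,w)|$, and conclude by duality. The paper cites \cite{Necas67} for the extension result and only writes out the $\widehat{V}$ case explicitly, exactly as you outline.
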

\begin{proof}
Let us estimate the first term on the left hand side of \eqref{eq:18-2}. Similar arguments allow to estimate the other two terms. 
Given $g\in H^{ 5/2  }(\partial \Omega)$, by extension results there exists $w\in H^3(\Omega)$ be such that $w=g, w_{,n}=0, w_{,nn}=0$ on $\partial \Omega$, and 
\begin{equation}\label{eq:18-3}
\|w\|_{H^3(\Omega)}\le C \|g\|_{H^{ 5/2  }(\partial \Omega)}
\end{equation}
where $C>0$ depends on $M_0, M_1$ (see for example \cite{Necas67}). 

We have 
\begin{eqnarray}\label{eq:18-4}
&&\int_{\partial \Omega} \widehat{V} g = \int_{\partial \Omega} \widehat{V}w + \widehat{M}_nw_{,n} + \widehat{M}^h_nw_{,nn}=
-\int_{\Omega} (\mathbb{P}+\mathbb{P}^h)D ^2 u_0 \cdot D ^2 w + \mathbb{Q} D^3 u_0 \cdot D^3 w \le \nonumber \\
&&C r_0^5 \left( \|D^2 u_0\|_{L^2(\Omega)} \cdot \|D^2 w\|_{L^2(\Omega)} + r_0^2\|D^3 u_0 \|_{L^2(\Omega)}\cdot \| D^3 w\|_{L^2(\Omega)}\right)\le \nonumber \\
&&C r_0\|u_0\|_{H^3(\Omega)}\cdot \|w\|_{H^3(\Omega)}\le C r_0 \|u_0\|_{H^3(\Omega)}\cdot \|g\|_{H^{ 5/2  }(\partial \Omega)}.
\end{eqnarray}
Therefore, by \eqref{eq:18-4},
\begin{equation}
\|\widehat{V}\|_{H^{ - 5/2  }(\partial \Omega)} = \sup_{\|g\|_{H^{ 5/2  }(\partial \Omega)}=1} \frac{1}{r_0} \int_{\partial \Omega}\widehat{V}g \le C \|u_0\|_{H^3(\Omega)}.
\end{equation}
\end{proof}

\begin{proof} [Proof of Proposition \ref{LPS}]
Let us assume for this proof $r_0=1$.  
By following the lines of the proof of Proposition 5.2 in \cite{MRV2007}, a process of iteration of the three spheres inequality \eqref{Tre-sfere-hessU} leads to
\begin{equation}\label{eq:18-5}
\frac{\|D^2 u_0\|_{L^2(\Omega_{(\chi+1)s)})}}{\|D^2 u_0\|_{L^2(\Omega)}}\le \frac{C}{s}\left( \frac{\|D^2 u_0\|_{L^2(B_s(x))}}{\|D^2 u_0\|_{L^2(\Omega)}} \right)^{{\theta_0}^{L-1}}
\end{equation}
for every $s\leq \frac{s_0}{\chi}$.
Here $\theta_0\in (0,1)$, $C>0$ and $\chi>1$ only depend on $\alpha_0, \gamma_0, M_0, M_1, M_2, t, l$; $s_0$ only depends on $M_0$ and is such that $\Omega_{\chi s}$ is connected for $s\leq \frac{s_0}{\chi}$ (see for instance Proposition $5.5$ in \cite{A-R-R-V}); $0<L<\frac{M_1}{\pi s^2}$.

Let us rewrite the left hand side of \eqref{eq:18-5} as follows 

\begin{equation}\label{eq:18-6}
\frac{\|D^2 u_0\|^2_{L^2(\Omega_{(\chi+1)s)})}}{\|D^2 u_0\|^2_{L^2(\Omega)}}= 1 - \frac{\int_{\Omega\setminus \Omega_{(\chi+1)s}}|D^2 u_0|^2}{\int_{\Omega}|D^2 u_0|^2} \ .
\end{equation}
By H\"{o}lder and Sobolev inequalities we have that 
\begin{eqnarray}\label{eq:18-7}
&&\|D^2 u_0 \|^2_{L^2(\Omega \setminus \Omega_{(\chi +1)s})}\le |\Omega \setminus |\Omega_{(\chi +1)s}|^{\frac{1}{2}}\|D^2 u_0 \|^2_{L^4(\Omega \setminus \Omega_{(\chi +1)s} )}\le 
 \nonumber \\
 &&C s^{\frac{1}{2}}\|D^2 u_0 \|_{H^{ 1/2  }(\Omega)}^2 \le C  s^{\frac{1}{2}}\|u_0\|^2_{H^3(\Omega)} \ ,
\end{eqnarray}
where $C>0$ only depends on $M_0, M_1$, $\alpha_0$, $\gamma_0$, $M_2$, $t$, $l$.

Let us notice that in the last step we have used the bound 

\begin{equation}\label{eq:18-8}
| {\Omega\setminus \Omega_{(\chi+1)s}}|\le C s \ , 
\end{equation}
where $C>0$ only depends on $M_0$ (see \cite{AR1998} ). 

Let us recall the following interpolation inequality: for any $u\in H^4(\Omega)$, we have 
\begin{equation}\label{eq:18-9}
\|u\|_{H^3(\Omega)}\le C \|u\|^{\frac{1}{2}}_{H^2(\Omega)}\|u\|^{\frac{1}{2}}_{H^4(\Omega)}
\end{equation}
(see \cite[Theorem 7.25]{G-Tr}), where $C>0$ depends on $M_0, M_1$ only.

By \eqref{eq:18-7}, standard Poincar\`e inequality, \eqref{eq:18-9}, \eqref{eq:5bis-1} and Lemma \ref{ex71}, we have 
\begin{equation}\label{eq:18-10}
\frac{\int_{\Omega\setminus \Omega_{(\chi+1)s}} |D^2u_0|^2 }{\int_{\Omega} |D^2 u_0|^2}\le C s^{\frac{1}{2}} \frac{\|u_0\|^2_{H^3(\Omega)}}{\|u_0\|^2_{H^2(\Omega)}}\le
 C s^{\frac{1}{2}} \left( \frac{\|u_0 \|_{H^4(\Omega)}}{\|u_0 \|_{H^3(\Omega)}}\right)^2\le C s^{\frac{1}{2}} F^2\le \frac{1}{2}
\end{equation}
for $s\le \bar{s}$, where $\bar{s}$ only depends on $M_0,M_1,t,l,\alpha_0,\gamma_0,M_2$ and $F$. 
\end{proof}
Finally, the thesis follows from \eqref{eq:18-5}, \eqref{eq:18-6} and \eqref{eq:18-10}.

\begin{proof}[Proof of Theorem \ref{theo:14-1}]
We can cover $D_{h_1r_0}$ with internally non-overlapping closed squares $Q_l$ of side $\epsilon r_0$, where $l=1,\dots, L$ and $\epsilon=\mbox{min} \left\{\frac{2h_1}{\chi + \sqrt{2}},\frac{h_1}{\sqrt{2}}\right\}$, where $\chi>1$ has been defined in Proposition \ref{LPS}. By construction, all the squares are contained in $D$ and $|D_{h_1r_0}|\le L\epsilon^2r_0$ .
 Let $\bar{l}$ be such that 
$\int_{Q_{\bar{l}}}|D^2 u_0|^2=\min_{l=1,\dots,L} \int_{Q_{l}}|D^2 u_0|^2$. By the fatness assumption \eqref{eq:14-1}, we have 
\begin{equation}\label{eq:18-11}
\int_D |D^2 u_0|^2 \ge \frac{|D|}{2r_0^2\epsilon^2}\int_{Q_{\bar{l}}}|D^2 u_0|^2 \ .
\end{equation}
Let $\overline{x}$ be the center of the square $Q_{\overline{l}}$.
By applying the Lipschitz propagation of smallness estimate \eqref{eq:18-1} with $x=\bar{x}$ and $s=\frac{\epsilon}{2}$, we have 
\begin{equation}\label{eq:18-12}
\int_D |D^2 u_0|^2 \ge \frac{C|D|}{r_0^2}\int_{\Omega}|D^2 u_0|^2 \,
\end{equation}
with $C$ only depending on $h_1$, $\alpha_0$, $\gamma_0$, $M_0$, $M_1$, $M_2$, $\frac{t}{r_0}$, $\frac{l}{r_0}$, $F$.
By \eqref{eq:18-12}, by applying Poincar\`e inequality, interpolation inequality \eqref{eq:18-9}, the regularity estimate \eqref{eq:5bis-1}, Lemma \ref{ex71}, \eqref{eq:16-1}-\eqref{eq:16-2} and the weak formulation of the problem \eqref{eq:1-2}-\eqref{eq:1-5}, we have 
\begin{eqnarray}\label{eq:18-13}
\int_{D} |D^2 u_0|^2 \ge C \frac{|D|}{r_0^4} \|u_0\|^2_{H^2(\Omega)}\ge C\frac{|D|}{r_0^4} \frac{ \|u_0\|^2_{H^3(\Omega)}}{ \|u_0\|^2_{H^4(\Omega)}} \|u_0\|^2_{H^3(\Omega)}\nonumber\\
\ge C |D| F^{-2} (\|D^2 u_0\|^2_{L^2(\Omega)} + r_0^2 \|D^3 u_0  \|_{L^2(\Omega)}^2)\ge C |D| F^{-2} r_0^{-5}W_0 \ ,
\end{eqnarray}
where $C>0$ depends on $M_0, M_1, \frac{t}{r_0}, \frac{l}{r_0}, \alpha_0, \gamma_0, M_2, h_1$. 
 Estimates \eqref{eq:14-2} and \eqref{eq:14-3} follow {from} \eqref{eq:18-13} and {from} the left hand side of \eqref{eq:17-1} and \eqref{eq:17-2} respectively.
\end{proof}

\section{Proof of Theorem \ref{theo:15-1}}
\label{sec:Proof-upper-bound-di-D-per-D-generica}

\begin{prop}[Doubling inequality for the Hessian in terms of the boundary data]\label{DoublingData}
Under the hypothesis of Theorem \ref{theo:15-1}, let $u_0\in H^{3}(\Omega)$ be the unique solution to \eqref{eq:1-2}-\eqref{eq:1-5} satisfying \eqref{eq:4-3}, with $\widehat{V}, \widehat{M}_n, \widehat{M}^h_n$ satisfying \eqref{eq:4-1} and \eqref{eq:4-2}. There exists a constant $\theta, 0<\theta<1$, only depending on $\alpha_0, \gamma_0, M_2, \frac{t}{r_0}, \frac{l}{r_0}$, such that for every $\bar{r}>0$ and for every $x_0\in \Omega_{\bar{r}r_0}$, we have 
\begin{equation}\label{eq:19-1}
\int_{B_{2r}(x_0)} |D^2 u_0|^2 \le K \int_{B_{r}(x_0)} |D^2 u_0|^2
\end{equation}
for every $r, 0<r<\frac{\theta}{2}\bar{r}r_0$, where $K>0$ only depends on $\alpha_0, \gamma_0, M_2, M_0, M_1, \bar{r}, \frac{t}{r_0},\frac{l}{r_0}$ and the ratio $F$ given by \eqref{eq:14-4}.

\end{prop}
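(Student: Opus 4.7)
The strategy is to pass from the local doubling estimate for the Hessian (Proposition \ref{theo:40.teo-sf}), whose constant contains the solution--dependent factor $\overline{N}$, to a doubling estimate whose multiplicative constant depends only on the a priori data, by controlling $\overline{N}$ uniformly from above via the Lipschitz propagation of smallness (Proposition \ref{LPS}). In this way the ratio $F$ enters the final constant only through the LPS constant $C_s$.

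Fix $x_0\in\Omega_{\bar r r_0}$ and set $R=\bar r r_0/2$, so that $B_R(x_0)\subset\subset\Omega$. After translating and rescaling by $y=(x-x_0)/R$, the function $\widetilde u_0(y):=u_0(x_0+Ry)$ is a weak solution on $B_1$ of an equation of type \eqref{eq:1-2} whose pulled--back (still isotropic) coefficients satisfy strong convexity of type \eqref{eq:8-3}--\eqref{eq:8-4} and regularity of type \eqref{eq:9-5} with constants depending on $\alpha_0,\gamma_0,M_2,t/r_0,l/r_0$ and $\bar r$. Applying \eqref{eq:10.6.1102-cube-sf} to $\widetilde u_0$ and pulling back to the original variables yields, for every $0<r<R R_1/2^{11}$,
\begin{equation*}
\int_{B_{2r}(x_0)}|D^2 u_0|^2 \;\le\; C_1\,\overline{N}_{x_0}^{\,3\overline{k}}\int_{B_{r}(x_0)}|D^2 u_0|^2,
\qquad
\overline{N}_{x_0}:=\frac{\int_{B_{R_1 R}(x_0)}|D^2 u_0|^2}{\int_{B_{R_1 R/2^9}(x_0)}|D^2 u_0|^2}.
\end{equation*}
This already gives the doubling \eqref{eq:19-1} on the range $r<(\theta/2)\bar r r_0$ with $\theta:=R_1/2^{11}$; it remains to bound $\overline{N}_{x_0}$ by a quantity independent of $x_0$.

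To this end, the numerator of $\overline{N}_{x_0}$ is trivially estimated from above by $\|D^2 u_0\|^2_{L^2(\Omega)}$, while the denominator is estimated from below by \eqref{eq:18-1} with the choice $s:=\min\{R_1/2^{10},\,1/\chi\}\,\bar r$. This choice ensures simultaneously that $B_{sr_0}(x_0)\subset B_{R_1 R/2^9}(x_0)$ and that the LPS hypothesis $x_0\in\Omega_{\chi s r_0}$ is implied by $x_0\in\Omega_{\bar r r_0}$, because $\chi s\le\bar r$. Proposition \ref{LPS} then gives
\begin{equation*}
\int_{B_{R_1 R/2^9}(x_0)}|D^2 u_0|^2 \;\ge\; \int_{B_{sr_0}(x_0)}|D^2 u_0|^2 \;\ge\; C_s\int_\Omega|D^2 u_0|^2,
\end{equation*}
hence $\overline{N}_{x_0}\le C_s^{-1}$ uniformly in $x_0$, and the desired inequality \eqref{eq:19-1} follows with $K:=C_1 C_s^{-3\overline{k}}$. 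The parameters on which $K$ is claimed to depend are precisely those entering $C_1$ and $C_s$: $C_1$ depends on $\alpha_0,\gamma_0,M_2,t/r_0,l/r_0,\bar r$, while $C_s$ inherits from Proposition \ref{LPS} the additional dependencies on $M_0,M_1$ and on $F$.

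The main obstacle I anticipate is the careful bookkeeping of the rescaling step: one must verify that under $y=(x-x_0)/R$ the pulled--back tensors on $B_1$ still satisfy the convexity and regularity hypotheses of Proposition \ref{theo:40.teo-sf} with constants controlled by the a priori data and by $\bar r$, so that $C_1$ (and hence $K$) inherits the claimed dependencies without degrading uncontrollably with $\bar r$. A subsidiary point is to reconcile the constraint $r<(\theta/2)\bar r r_0$ coming from the local doubling range with the constraint $s\le\bar r/\chi$ needed to invoke Proposition \ref{LPS}; both are handled by the explicit choice of $s$ above, so that $\theta$ itself remains an absolute number while the dependence on $\chi$ (and thus on $\alpha_0,\gamma_0,M_2,t/r_0$) is confined to $C_s$ and hence to $K$.
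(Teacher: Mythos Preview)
Your proof is correct and follows essentially the same two-step approach as the paper: first rescale Proposition~\ref{theo:40.teo-sf} to obtain a local doubling inequality whose constant is monotone in the frequency-type ratio $\overline{N}_{x_0}$, then bound that ratio uniformly by estimating its numerator trivially by $\|D^2u_0\|_{L^2(\Omega)}^2$ and its denominator from below via the Lipschitz propagation of smallness \eqref{eq:18-1}. Your version is in fact more explicit than the paper's (which simply says ``by a scaling argument'' and ``by applying \eqref{eq:18-1} to bound from below the denominator, we trivially obtain the desired bound''), in particular in your careful choice of the LPS radius $s$ to simultaneously fit inside $B_{R_1R/2^9}(x_0)$ and satisfy $\chi s\le\bar r$.
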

\begin{proof}
By applying a scaling argument to \eqref{eq:10.6.1102-cube-sf} and \eqref{eq:10.6.1108-cube-sf}, there exists an absolute constant $\theta, 0<\theta<1$ such that for every $\bar{r}>0$ and for every $x_0\in \Omega_{\bar{r}r_0}$ we have 
\begin{equation}\label{eq:19-1-2}
\int_{B_{2r}(x_0)}|D^2 u_0|^2 \le K \int_{B_{r}(x_0)}|D^2 u_0 |^2
\end{equation}
for every $r, 0<r<\frac{\theta}{2}\bar{r}r_0$, where $K>0$ only depends on $\alpha_0, \gamma_0, M_2, \frac{t}{r_0}, \frac{l}{r_0}$ and $\bar{r}$ and the increasingly on the ratio 
\begin{equation}\label{eq:19-2}
N=\frac{\int_{B_{\bar{r}r_0}(x_0)}|D^2u_0|^2}{\int_{B_{\frac{\bar{r}r_0}{2^9}}(x_0)}|D^2u_0|^2} .
\end{equation}
By applying \eqref{eq:18-1} to bound from below the denominator, we trivially obtain the desired bound. 

\end{proof}

\begin{prop}
\label{prop:Ap} {\rm ($A_p$ property)} Let the assumptions of
Proposition \ref{DoublingData} be satisfied. For every
$\overline{r}>0$ there exist $B>0$ and $p>1$ such that for every
$x_0\in\Omega_{\overline{r}r_0}$ we have
\begin{multline}
    \label{eq:Pesi-Ap}
    \left
    (\frac{1}
    {|B_{r}(x_0)|}\int_{B_{r}(x_0)}|D^2 u_0|^2
    \right)
    \left(\frac{1} {|B_{r}(x_0)|}\int_{B_{r}(x_0)}
    |D^2 u_0|^{-2/(p-1)}
    \right)^{p-1}
    \leqslant B, \\
    \qquad\qquad\qquad\qquad \textrm{for every}~ r,\ 0<r\leqslant
    \frac{\vartheta}{2}\overline{r}r_0,
\end{multline}
where $\vartheta$ is as in Proposition \ref{DoublingData} and
where $B$, $p$ only depend on $\alpha_0, \gamma_0, M_2, M_0, M_1, \bar{r}, \frac{t}{r_0},\frac{l}{r_0}$ and the ratio $F$ given by \eqref{eq:14-4}.
\end{prop}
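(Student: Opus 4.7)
The plan is to obtain the $A_p$ estimate \eqref{eq:Pesi-Ap} as a direct consequence of the doubling inequality in Proposition \ref{DoublingData} via the classical machinery relating doubling weights to Muckenhoupt classes, localized to the admissible range of radii.

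Set $w := |D^2 u_0|^2$, which is non-negative and locally integrable in $\Omega$. Proposition \ref{DoublingData} says that there exist $\theta\in(0,1)$ and $K>1$, with exactly the dependencies claimed in the statement, such that
\begin{equation*}
w(B_{2r}(x_0)) \leq K\, w(B_r(x_0)) \qquad \forall\, x_0\in\Omega_{\bar r r_0},\ \forall\, 0<r\leq \tfrac{\theta}{2}\bar r r_0.
\end{equation*}
This uniform doubling is the only input required by the classical route. As a first step I would deduce the corresponding localized $A_\infty$ condition: there exist $C^\ast>0$ and $\sigma\in(0,1)$, depending only on $K$, such that
\begin{equation*}
\frac{w(E)}{w(B_r(x_0))} \leq C^\ast \left(\frac{|E|}{|B_r(x_0)|}\right)^{\!\sigma}
\end{equation*}
for every measurable $E\subset B_r(x_0)$ and every admissible $r$. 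This follows from a Calderón--Zygmund stopping-time argument performed inside $B_r(x_0)$: one repeatedly splits the ball into dyadic sub-balls and, by the doubling bound applied to each, compares the $w$-mass of a sub-ball to that of its parent. Iteration yields the power-type decay of $w(E)/w(B_r(x_0))$ in terms of $|E|/|B_r(x_0)|$.

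Next, I would invoke the Coifman--Fefferman theorem (see, e.g., E. M. Stein, \emph{Harmonic Analysis}, Ch.~V): the $A_\infty$ inequality above is equivalent to a reverse H\"older inequality
\begin{equation*}
\left(\frac{1}{|B|}\int_{B} w^{1+\delta}\right)^{\!1/(1+\delta)} \leq C_1\, \frac{1}{|B|}\int_{B} w,
\end{equation*}
with $\delta>0$ and $C_1>0$ depending only on $(C^\ast,\sigma)$, and hence only on $K$. A standard self-improvement then gives $w\in A_p$ for some $p=p(\delta)>1$, which is exactly the estimate \eqref{eq:Pesi-Ap}. Tracking the dependencies through the chain, $B$ and $p$ depend only on the quantities controlling $K$ in Proposition \ref{DoublingData}, i.e.\ on $\alpha_0,\gamma_0,M_2,M_0,M_1,\bar r,\tfrac{t}{r_0},\tfrac{l}{r_0}$ and $F$.

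The main technical point, and the step I expect to be the most delicate to write out cleanly, is checking that the Coifman--Fefferman machinery can be applied in the localized setting at hand: the doubling bound is only available for balls $B_r(x_0)$ with $x_0\in\Omega_{\bar r r_0}$ and $r\leq \tfrac{\theta}{2}\bar r r_0$, so the stopping-time/iterative arguments must be carried out using only such balls. Since the construction uses nested dyadic sub-balls concentric with the initial one, and since halving preserves both admissibility conditions, this restriction causes no real difficulty, but it has to be verified explicitly to guarantee that the constants inherit the correct dependencies.
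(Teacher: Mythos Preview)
There is a genuine gap in your argument: the implication ``doubling $\Rightarrow A_\infty$'' that you invoke is false in general. A doubling condition on a measure $w\,dx$ only compares $w$ with itself at different scales; it carries no information about the relationship between $w$-measure and Lebesgue measure, which is precisely what $A_\infty$ encodes. There exist doubling weights that are not in any $A_p$ (and doubling measures that are even singular with respect to Lebesgue measure). In particular, the Calder\'on--Zygmund stopping-time argument you sketch cannot produce the bound $w(E)/w(B)\le C(|E|/|B|)^\sigma$ from doubling alone: stopping on dyadic sub-balls and using $w(\text{child})\ge K^{-1}w(\text{parent})$ tells you nothing about $w(E)$ for an arbitrary measurable $E$.

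What the paper actually does is exploit the PDE structure to obtain a reverse H\"older inequality directly, and only then apply Coifman--Fefferman. Concretely, one subtracts an affine function so that $v_0=u_0+ax_1+bx_2+c$ has zero mean and zero gradient mean on $B_{2r}(x_0)$, and then chains the interior regularity estimate (Theorem~\ref{theo:5ter-1} / Lemma~\ref{lem:intermezzo-cube}) with the Poincar\'e inequality (Proposition~\ref{Poincare}) and the doubling of Proposition~\ref{DoublingData}:
\[
\|D^2u_0\|_{L^\infty(B_r)}=\|D^2v_0\|_{L^\infty(B_r)}\le \frac{C}{r^2}\|v_0\|_{H^2(B_{2r})}\le C\|D^2u_0\|_{L^2(B_{2r})}\le C\|D^2u_0\|_{L^2(B_r)}.
\]
This $L^\infty$--$L^2$ estimate is a (very strong) reverse H\"older inequality for $|D^2u_0|^2$, and Coifman--Fefferman then yields \eqref{eq:Pesi-Ap} with the stated dependencies. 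The essential point you are missing is that the reverse H\"older step requires elliptic regularity, not just the doubling inequality.
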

\begin{proof}
In view of the results in \cite{Coi-Fe}, it is sufficient to prove a
reverse H\"older's inequality for $|D^2 u_0|^2$. Let us
introduce
\begin{equation}
    \label{eq:def-v0}
    v_0=u_0 +ax_1+bx_2+c,
\end{equation}
such that
\begin{equation}
    \label{eq:norm-v0}
    \int_{B_{2r}(x_0)} v_0 dx=0, \quad \int_{B_{2r}(x_0)} v_0,_\alpha
    dx=0,\quad \alpha=1,2.
\end{equation}
By interior regularity estimates (see, for instance, \cite[Theorem
8.3]{MRV2007}, by Poincar\'{e} inequality (see, for instance,
\cite[Proposition 3.3]{MRV2007}) and by Proposition
\ref{DoublingData} we have
\begin{multline}
\label{eq:reverse} \|D^2 u_0\|_{L^\infty(B_r(x_0))}=
\|D^2 v_0\|_{L^\infty(B_r(x_0))}\leq
\frac{C} {r^2}\|v_0\|_{H^2(B_{2r}(x_0))}\leq \\
\leq C\|D^2 v_0\|_{L^2(B_{2r}(x_0))} =C\|D^2
u_0\|_{L^2(B_{2r}(x_0))}\leq C \|D^2 u_0\|_{L^2(B_{r}(x_0))},
\end{multline}
where $C$ only depends on $\alpha_0, \gamma_0, M_2, M_0, M_1, \bar{r}, \frac{t}{r_0},\frac{l}{r_0}$ and the ratio $F$ given by \eqref{eq:14-4}.
\end{proof}

\begin{proof}[Proof of Theorem \ref{theo:15-1}]

Let us cover $D$ with internally non overlapping closed cubes
$Q_{j}$, $j=1,...,J$, with side $\epsilon=\frac {{\theta} d_{0}}{4\sqrt 2}r_0$, where
$\theta<1$ has been introduced in Proposition
\ref{DoublingData}. Let $p>1$  be the exponent introduced in Proposition \ref{prop:Ap}.  By H\"{o}lder's
inequality we have
\begin{equation}
  \label{eq:main-2}
  |D|
  \leq
  {\left (
  \int_{\bigcup_{j=1}^{J} Q_{j}} {| {D}^2 u_{0}|}^{- \frac {2}{p-1}}
  \right )}^{\frac {p-1}{p}}
  {\left (
  \int_{D} {| {D}^2 u_{0}|}^{2}
  \right )}^{\frac {1} {p}}.
\end{equation}
 By applying
Proposition \ref{prop:Ap}, with $\bar r=\frac{d_0}{2}$ to
the balls $B_{j}$ circumscribing each $Q_{j}$, $j=1,...,J$, we
have

\begin{multline}
   \label{eq:main-3}
  {\left (
  \int_{\bigcup_{j=1}^{J} Q_{j}} {| {D}^2 u_{0}|}^{- \frac {2}{p-1}}
  \right )}^{\frac {p-1}{p}}
  \leq
  {\left( \frac{\pi}{2}{\epsilon}^{2} \sum_{j=1}^{J}
  \frac {1} {|B_{j}|}
  \int_{B_{j}} {| {D}^2 u_{0}|}^{- \frac {2}{p-1}}
  \right )}^{\frac {p-1}{p}}
  \leq
  \\
  \leq
  {\left ( \frac{\pi}{2}{\epsilon}^{2} \sum_{j=1}^{J}
  {\left (
  \frac
  {B}
  {\frac {1}
  {|B_{j}|} \int_{B_{j}} {| {D}^2 u_{0}|}^{2}}
  \right )}^{\frac {1}{p-1}}
  \right )}^{\frac {p-1}{p}}
  \leq
  \frac
  {\frac{\pi}{2}\left (J{\epsilon}^{2} \right )^{\frac {p-1}{p}} {B}^{\frac {1}{p}}\epsilon^{2/p} }
  {
  \min_{j}
  \left(\int_{B_{j}} {| {D}^2 u_{0}|}^{2}
  \right )^{\frac {1} {p}}
                     },
\end{multline}
where the constants $p$ and $B$ only depend on $\alpha_0$,
$\gamma_0$, $M_2$, $M_1$, $M_0$, $\frac{t}{r_0}, \frac{l}{r_0}$, $d_0$ and  the ratio $F$ given by \eqref{eq:14-4}. By
\eqref{eq:1-1} we have 
\begin{equation}
  \label{eq:main-4}
   J{\epsilon}^{2}=\sum_{j=1}^{J} {|Q_{j}|} \leq |\Omega|\leq
   M_1r_0^2.
\end{equation}
Consequently from \eqref{eq:main-2}--\eqref{eq:main-4} and recalling the definition of $\epsilon$, we have
\begin{equation}
  \label{eq:main-5}
  |D|
  \leq
  Cr_0^2
  {\left (
  \frac
  { \int_{D} {| {D}^2 u_{0}|}^{2} }
  {\int_{B_{\overline{j}}} {| {D}^2 u_{0}|}^{2} }
  \right )} ^{\frac {1} {p}},
\end{equation}
with $\overline{j}$ such that $\int_{B_{\overline{j}}} {| {D}^2 u_{0}|}^{2}=\min_j \int_{B_{j}} {| {D}^2 u_{0}|}^{2}$.

By Proposition \ref{LPS}, by standard Poincar\`e inequality, by the interpolation inequality \eqref{eq:18-9}, by \eqref{eq:18-2}, \eqref{eq:5bis-1}, \eqref{eq:9-5} and \eqref{eq:11-4}, we have   
\begin{multline}
  \label{eq:main-6}
\int_{B_{\overline{j}}} {| {D}^2 u_{0}|}^{2} \geq C \int_{\Omega} {| {D}^2 u_{0}|}^{2}
\geq \frac{C}{r_0^2}\|u_0\|^2_{H^2(\Omega)}\geq\\
\geq\frac{C}{r_0^2}\left(\frac{\|u_0\|_{H^3(\Omega)}}{\|u_0\|_{H^4(\Omega)}}\right)^2\cdot \|u_0\|_{H^3(\Omega)}^2
\geq CF^{-2}\left(\int_\Omega {| {D}^2 u_{0}|}^{2}+r_0^2{| {D}^3 u_{0}|}^{2}\right)
\geq \frac{C}{r_0^3}W_0.
\end{multline}
By \eqref{eq:main-5}, \eqref{eq:main-6},
\begin{equation}
  \label{eq:main-7}
  |D|
  \leq
  Cr_0^2
  {\left (
  \frac
  { r_0^3\int_{D} {| {D}^2 u_{0}|}^{2} }
  {W_0 }
  \right )} ^{\frac {1} {p}},
\end{equation}
with the constant $C>0$ only depending on $\alpha_0$,
$\gamma_0$, $M_2$, $M_1$, $M_0$, $\frac{t}{r_0}, \frac{l}{r_0}$, $d_0$ and  the ratio $F$ given by \eqref{eq:14-4}. 

Finally, from the left hand sides of
\eqref{eq:17-1} and \eqref{eq:17-2} and from
\eqref{eq:main-7} we end up with the upper bounds for $|D|$ in \eqref{eq:15-1} and
\eqref{eq:15-2}.

\end{proof}

\bigskip

\bibliographystyle{plain}

\end{document}